\theoremstyle{plain}
\newtheorem{theorem}{Theorem}[section]
\newtheorem{lemma}[theorem]{Lemma}
\newtheorem{proposition}[theorem]{Proposition}
\newtheorem{conjecture}[theorem]{Conjecture}
\newtheorem{problem}[theorem]{Problem}
\theoremstyle{definition}
\newtheorem{definition}[theorem]{Definition}
\newtheorem{claim}{Claim}[theorem]
\newcommand{\dash}{\nobreakdash-\hspace{0pt}}
\newcommand{\ba}{\backslash}
\newcommand{\graph}{
\tikz[baseline=-2.75pt]{
\node (A) at (210:4pt) [circle, fill=black, inner sep = 0.5pt] {};
\node (B) at (90:4pt) [circle, fill=black, inner sep = 0.5pt] {};
\node (C) at (330:4pt) [circle, fill=black, inner sep = 0.5pt] {};
\draw (A.center) -- (B.center) -- (C.center);
\draw (A.center) to[out=40, in=140] (C.center);
\draw (A.center) to[out=320, in=220] (C.center);
}
}
\newcommand{\mbb}[1]{\ensuremath{\mathbb{#1}}}
\newcommand{\mcal}[1]{\ensuremath{\mathcal{#1}}}
\newcommand{\id}{\ensuremath{\operatorname{id}}}
\newcommand{\trun}{\ensuremath{\operatorname{trun}}}
\title{Fractal classes of matroids}
\author[Mayhew]{Dillon Mayhew}
\address{School of Mathematics and Statistics,
Victoria University of Wellington,
New Zealand}
\email{dillon.mayhew@vuw.ac.nz}
\author[Newman]{Mike Newman}
\address{Department of Mathematics and Statistics,
University of Ottawa,
Ottawa,
Canada}
\email{mnewman@uottawa.ca}
\author[Whittle]{Geoff Whittle}
\address{School of Mathematics and Statistics,
Victoria University of Wellington,
New Zealand}
\email{geoff.whittle@vuw.ac.nz}
\date{\today}
\begin{document}

\begin{abstract}
A minor-closed class of matroids is (strongly) fractal if the number of
$n$\dash element matroids in the class is dominated by the
number of $n$\dash element excluded minors.
We conjecture that when \mbb{K} is an infinite field, the class of
\mbb{K}\dash representable matroids is strongly fractal.
We prove that the class of sparse paving matroids
with at most $k$ circuit-hyperplanes is a strongly fractal class
when $k$ is at least three.
The minor-closure of the class of spikes with at most $k$
circuit-hyperplanes (with $k\geq 5$) satisfies a strictly weaker
condition: the number of $2t$\dash element matroids in
the class is dominated by the number of
$2t$\dash element excluded minors.
However, there are only finitely many excluded minors
with ground sets of odd size.
\end{abstract}

\maketitle

\section{Introduction}

In \cite{MNW09} we proved that every real-representable matroid is
contained as a minor in an excluded minor for the class of real-representable
matroids.
(The same phenomenon holds for any infinite field.)
Geelen and Campbell strengthened this by showing that
every real-representable matroid is a minor of a complex-representable
excluded minor for the class of real-representable matroids \cite{CG18}.
In contrast to these results, the resolution of Rota's conjecture \cite{GGW14}
implies that there are only finitely many excluded minors for \mbb{F}\dash representability
when \mbb{F} is a finite field.

In this article we consider another possible dichotomy
between finite fields and infinite fields
in matroid representation theory.
Let \mcal{M} be a minor-closed class of matroids, and
let \mcal{EX} be the class of excluded minors for \mcal{M}.
For any non-negative integer $n$, let $m_{n}$ be the number of
non-isomorphic $n$\dash element matroids in \mcal{M}.
Thus $m_{n}$ counts the $n$\dash element members of \mcal{M}
modulo the equivalence relation of isomorphism.
Similarly, let $x_{n}$ be the number of non-isomorphic
$n$\dash element matroids in \mcal{EX}.
(Henceforth, when we refer to a matroid, we usually mean an isomorphism class
of matroids.)
We consider the probability that a matroid
chosen randomly from the $n$\dash element members of $\mcal{M}\cup \mcal{EX}$
is an excluded minor.
In other words, we consider the ratio
\[
\frac{x_{n}}{m_{n} + x_{n}}.
\]
We will denote this fraction by $\Gamma_{\mcal{M}}(n)$.
We consider only the case that \mcal{M} contains infinitely many matroids,
so that $\Gamma_{\mcal{M}}(n)$ is defined for all $n$.
If \mcal{M} has only finitely many excluded minors, then
$\Gamma_{\mcal{M}}(n)=0$ for all large enough values of $n$.
It is impossible for $\Gamma_{\mcal{M}}(n)$ to be equal to one,
but it could tend to one in the limit.
In this case our random choice is asymptotically certain to be
an excluded minor, so in some sense, the class \mcal{M} is
eventually overwhelmed by its `boundary': the set of excluded minors.
This leads us to the following terminology.

\begin{definition}
\label{father}
Let \mcal{M} be a minor-closed class of matroids.
If
\[\lim_{n\to\infty}\Gamma_{\mcal{M}}(n)= 1,\]
then \mcal{M} is a \emph{strongly fractal} class.
\end{definition}

If \mcal{M} is the class of \mbb{F}\dash representable matroids where
\mbb{F} is a finite field, then $\Gamma_{\mcal{M}}(n)=0$ for all large
enough values of $n$, since Rota's conjecture is true.
We believe that this fails for infinite fields in the strongest possible
way.

\begin{conjecture}
Let \mbb{K} be an infinite field.
The class of $\mathbb{K}$-representable matroids is strongly fractal.
\end{conjecture}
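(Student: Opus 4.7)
The plan is to amplify the result of \cite{MNW09}---that every \mbb{K}\dash representable matroid $M$ is a minor of some excluded minor for \mbb{K}\dash representability---into a quantitative statement. Let $e_c(n)$ denote the number of pairwise non-isomorphic $n$\dash element excluded minors that contain a fixed $(n-c)$\dash element representable matroid as a minor, for some fixed constant $c$. If one can show that $e_c(n)\cdot m_{n-c}$ grows faster than $m_n$, then, since a given excluded minor has only polynomially many $(n-c)$\dash element minors, we obtain $x_n/m_n \to \infty$ and hence $\Gamma_{\mcal{M}}(n)\to 1$.

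The construction step is where I would begin. Given a representation of $M$ by a matrix over \mbb{K}, I would apply a von Staudt-style enlargement introducing several transcendentals $t_1,\dots,t_s$, chosen so that the enlarged configuration is representable only over extensions satisfying a system of polynomial relations on the $t_i$ that has no \mbb{K}\dash solutions. By varying the combinatorial parameters of the enlargement---which lines to extend, how many new elements to add, and where to place them---I would aim to produce a family whose size grows exponentially in $s$. The Geelen--Campbell result \cite{CG18} provides concrete encouragement here, since their construction already produces excluded minors for \mbb{R}\dash representability via \mbb{C}\dash representable extensions and has enough flexibility to support many distinct parameter choices.

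The main obstacle will be two-fold. First, within the constructed family one must prove genuine non-isomorphism: different parameter choices can yield the same matroid up to isomorphism, and without a clean invariant distinguishing them, a large construction count does not translate into a large count of matroids. Second, no sharp asymptotic is known for $m_n$ when \mbb{K} is infinite, so controlling the denominator in the target ratio is a major difficulty; one would likely need either a structural dichotomy (every representable matroid is either combinatorially highly restricted or else admits exponentially many excluded-minor extensions) or a direct surjection $\mcal{EX}_{n+c}\to \mcal{M}_n$ whose fibres have unbounded size. I do not see how to transplant the combinatorial techniques used for sparse paving matroids in this article, since those lean heavily on the local structure of circuit-hyperplanes, which has no counterpart in the general representable setting.
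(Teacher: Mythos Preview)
The statement is presented in the paper as an open \emph{conjecture}; the paper offers no proof, so there is nothing to compare your proposal against.

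Your proposal is explicitly a research outline rather than a proof, and you correctly identify the two central gaps yourself. Both are genuine and unresolved. The more serious is the second: the inequality you need, that $e_{c}(n)\cdot m_{n-c}$ outpaces $m_{n}$, presupposes some control on the ratio $m_{n}/m_{n-c}$, and for an infinite field nothing of the sort is known. There is no reason to expect this ratio to be polynomially bounded, so even producing exponentially many pairwise non-isomorphic excluded minors over each representable $M$ would not by itself yield $x_{n}/m_{n}\to\infty$. The first obstacle (certifying non-isomorphism within a von Staudt-style family) is also substantial: the constructions in \cite{MNW09} and \cite{CG18} were designed to produce \emph{one} excluded minor per representable matroid, and the flexibility you allude to has never been converted into a counting statement. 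As written, then, the proposal is a plausible direction of attack but not a proof, and the conjecture remains open.
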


The class of gammoids is like the class of real-representable matroids,
in that the excluded minors form a maximal antichain \cite{May16}.
We conjecture that the class of gammoids is strongly fractal.

In this article we are content merely to establish the non-obvious
fact that strongly fractal classes exist.
A matroid is \emph{sparse paving} if every non-spanning circuit is a
hyperplane.

\begin{theorem}
\label{valley}
Let $k\geq 3$ be a positive integer.
Let $\mcal{P}_{k}$ be the class of sparse paving matroids with at most
$k$ circuit-hyperplanes.
Then $\mcal{P}_{k}$ is strongly fractal.
\end{theorem}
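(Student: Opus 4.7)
The plan is to prove the stronger statement $m_n / x_n \to 0$, which yields $\Gamma_{\mcal{P}_k}(n) \to 1$. I will estimate $m_n$ and $x_n$ by counting ``Venn patterns'' of circuit-hyperplane configurations and comparing the resulting polytope dimensions.

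First I would record how circuit-hyperplanes transform under minors in a sparse paving matroid $M$ with circuit-hyperplane set $\mcal{H}$: the circuit-hyperplanes of $M \ba e$ are $\{H \in \mcal{H} : e \notin H\}$, and those of $M/e$ are $\{H \ba e : H \in \mcal{H}, e \in H\}$. Consequently, no minor has more circuit-hyperplanes than $M$, so $\mcal{P}_k$ is minor-closed. It follows that a sparse paving matroid with exactly $k+1$ circuit-hyperplanes is an excluded minor for $\mcal{P}_k$ precisely when every ground-set element lies in at least one and at most $k$ of those circuit-hyperplanes; this is the family I would use for the lower bound on $x_n$. (Non-sparse-paving excluded minors can only add to $x_n$, so ignoring them is harmless.)

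To parametrize isomorphism classes of sparse paving matroids with $j$ labeled circuit-hyperplanes $H_1,\ldots,H_j$, let $n_S$ count the elements belonging to exactly $\{H_i : i \in S\}$ for each $S \subseteq [j]$. Since sparse paving matroids are determined by their circuit-hyperplane set, two such matroids are isomorphic iff their Venn patterns agree up to an $S_j$-permutation of labels, a constant-size quotient. A valid rank-$r$ pattern on $n$ elements is a nonnegative integer solution to the system $\sum_{S \ni i} n_S = r$ (equal hyperplane sizes), $\sum_S n_S = n$, and the antichain inequality $\sum_{S \supseteq \{i,j\}} n_S \leq r - 2$ for distinct $i, j$. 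Treating $(n_S, r)$ as variables, the linear equalities cut out a polytope of dimension $2^j - j$, so the number of valid integer patterns with exactly $j$ circuit-hyperplanes is $\Theta(n^{2^j - j})$. Since $2^j - j$ is increasing in $j$, summing over $j \leq k$ yields $m_n = O(n^{2^k - k})$.

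Applying the same framework with $j = k+1$ and adjoining the two extra equalities $n_\emptyset = 0$ and $n_{[k+1]} = 0$ removes two variables, lowering the polytope dimension to $2^{k+1} - k - 3$. To verify this polytope is full-dimensional I would use the ``uniform'' configuration with $n_S = 1$ for every nonempty proper $S \subsetneq [k+1]$, which gives rank $r = 2^k - 1$ and pairwise intersection $2^{k-1} - 1$; this is strictly less than $r - 2 = 2^k - 3$ exactly when $k \geq 3$, so the configuration is a strict interior point and perturbations around it realize all free directions. Hence $x_n = \Omega(n^{2^{k+1} - k - 3})$, and the difference $(2^{k+1} - k - 3) - (2^k - k) = 2^k - 3 \geq 5$ for $k \geq 3$ forces $m_n / x_n = O(n^{-(2^k - 3)}) \to 0$. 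The main obstacle I anticipate is the dimension bookkeeping --- confirming that the linear equalities are independent, exhibiting a strict interior point compatible with the antichain inequality (precisely where the $k \geq 3$ hypothesis enters), and verifying that the $S_j$-quotient on isomorphism classes does not suppress the polynomial growth.
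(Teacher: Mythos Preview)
Your approach is correct and parallels the paper's strategy: both encode a sparse paving matroid by the Venn pattern $(n_{S})$ of its circuit-hyperplanes (the paper's $\psi_{\mcal{C}}$), bound $m_{n}$ above and $x_{n}$ below by polynomials in $n$, and compare exponents. The differences are in the bookkeeping. For the upper bound the paper (\Cref{asylum}) ignores the equicardinality relations $\sum_{S\ni i} n_{S}=r$ and simply counts all nonnegative vectors summing to $n$, obtaining $m_{n}=O(n^{2^{k}-1})$; you impose those $j$ extra linear relations and get the sharper $m_{n}=O(n^{2^{k}-k})$. For the lower bound the paper (\Cref{collar}) fixes the singleton cells by a balancing procedure and counts solutions to a single weighted Diophantine equation in the remaining $2^{k+1}-k-3$ variables via a Schur-type generating-function argument, yielding $x_{n}=\Omega(n^{2^{k+1}-k-4})$; you instead let every nonempty proper cell vary subject only to equicardinality and the antichain inequality, and read the exponent $2^{k+1}-k-3$ directly off the polytope dimension. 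Your resulting gap $2^{k}-3$ exceeds the paper's $2^{k}-k-3$, though both are positive for $k\geq 3$. What your argument buys is a cleaner dimension count and tighter exponents; what the paper's buys is a completely explicit construction with no appeal to lattice-point asymptotics.

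One place your write-up needs care: the antichain constraint $\sum_{S\supseteq\{i,j\}} n_{S}\leq r-2$ is inhomogeneous, so the polytopes $P_{n}$ do not dilate exactly with $n$, and a bare Ehrhart statement does not apply. You should argue, for instance, that a suitable dilate of your uniform interior configuration (plus a bounded correction to hit the right $n$) sits strictly inside $P_{n}$ for all large $n$, so that a $d$-dimensional box of side $\Omega(n)$ is available. This is routine but should be said.
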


A rank\dash $r$ \emph{spike} has a ground set of size $2r$,
say $\{a_{1},b_{1},\ldots, a_{r},b_{r}\}$.
Assume that $r>3$.
Then the non-spanning circuits are exactly the sets of the form
$\{a_{i},b_{i},a_{j},b_{j}\}$, along with (possibly) some sets
that intersect each $\{a_{i},b_{i}\}$ in either $a_{i}$ or $b_{i}$.
Any circuit of the latter type is also a hyperplane.
Sometimes such a matroid is called a \emph{tipless} spike.
The class of spikes with a bounded number of circuit-hyperplanes
is not minor-closed, but if we close it under minors, we obtain
a class that has a weaker fractal property.

\begin{definition}
\label{master}
Let \mcal{M} be a minor-closed class of matroids.
If $\Gamma_{\mcal{M}}(1), \Gamma_{\mcal{M}}(2), \Gamma_{\mcal{M}}(3),\ldots$
contains an infinite subsequence that converges to one, then \mcal{M} is
\emph{weakly fractal}.
\end{definition}

Obviously a strongly fractal class is weakly fractal.

\begin{theorem}
\label{memory}
Let $k \geq 5$ be an integer.
Let $\mcal{S}_{k}$ be the class produced by taking minors of the
spikes with at most $k$ circuit-hyperplanes.
Then $\mcal{S}_{k}$ is a weakly fractal class, but is not strongly fractal.
\end{theorem}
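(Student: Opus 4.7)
The approach splits by parity of the ground set: the plan is to show $\Gamma_{\mcal{S}_k}(2t)\to 1$ while $\Gamma_{\mcal{S}_k}(2t+1)\to 0$. This requires a structural description of $\mcal{S}_k$. A rank-$r$ tipless spike is specified up to isomorphism by a collection of at most $k$ circuit-hyperplane transversals modulo the natural action of the group of order $2^r r!$ that swaps endpoints within each leg $\{a_i,b_i\}$ and permutes legs. The minor operations behave rigidly: simultaneously deleting $a_i$ and contracting $b_i$ yields a rank-$(r-1)$ spike whose transversals are obtained by projection from those of the original, while a single deletion or single contraction produces a non-spike $(2r-1)$-element matroid of ``almost-spike'' form.

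For the odd-sized case, the goal is to show that only finitely many excluded minors have odd ground set. The strategy is a structural lemma saying that every odd-sized member of $\mcal{S}_k$ is obtained from a spike in $\mcal{S}_k$ by a single deletion or contraction, followed by an even number of further minor operations. If $N$ is an excluded minor on $2t+1$ elements and $t$ is large, the rigid structure inherited from $N$'s $2t$-element proper minors should then force $N$ itself to be a single-step minor of some spike in $\mcal{S}_k$, contradicting $N\notin\mcal{S}_k$. Combined with the easy fact that $m_{2t+1}\to\infty$ (many non-isomorphic odd-sized minors arise from spikes of various ranks), this yields $\Gamma_{\mcal{S}_k}(2t+1)\to 0$ and rules out strong fractality.

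For the even-sized case I would follow the template of Theorem~\ref{valley}. The dominant contribution to $m_{2t}$ is from rank-$t$ tipless spikes themselves, yielding approximately $\binom{2^t}{\leq k}/(2^t t!) \sim 2^{t(k-1)}/(k!\,t!)$ isomorphism classes once one shows the stabilisers in the wreath-product action are typically trivial for large $t$; the contribution from $2t$-element minors of higher-rank spikes should be controllable by a finer structural analysis and absorbed into a lower-order term. For a lower bound on $x_{2t}$, I would exhibit excluded minors as rank-$t$ tipless spikes with exactly $k+1$ circuit-hyperplane transversals chosen so that the configuration is not realisable as a minor restriction of any larger spike with at most $k$ circuit-hyperplanes; these number approximately $\binom{2^t}{k+1}/(2^t t!) \sim 2^{tk}/((k+1)!\,t!)$. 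The ratio $x_{2t}/m_{2t}$ then grows like $2^t/(k+1)$, giving $\Gamma_{\mcal{S}_k}(2t)\to 1$.

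The main obstacle is the structural classification underlying both tasks. Because minors of spikes are not themselves spikes, and because deletion and contraction interact nontrivially with the leg pairing and the transversal structure, the delicate points are (i) classifying the odd-sized near-spike matroids precisely enough to force large odd-sized excluded minors to collapse into $\mcal{S}_k$, and (ii) proving that a typical rank-$t$ tipless spike with $k+1$ circuit-hyperplanes is genuinely an excluded minor rather than a minor of some larger spike with at most $k$ circuit-hyperplanes. Once this structural picture is in place, the parity-based asymptotic counting is a routine extension of the technique used for Theorem~\ref{valley}.
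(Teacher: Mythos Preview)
Your parity split and the high-level plan for the odd case are correct: the paper also shows that $\mcal{S}_{k}$ has only finitely many odd-sized excluded minors, so $\Gamma_{\mcal{S}_{k}}(2t+1)=0$ eventually. Your sketch of how to prove this is vague, but the target statement is right.

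The even case, however, has a genuine error in the counting. You estimate $m_{2t}\sim 2^{t(k-1)}/(k!\,t!)$ and $x_{2t}\sim 2^{tk}/((k+1)!\,t!)$, based on the heuristic that stabilisers in the wreath-product action of order $2^{t}t!$ are typically trivial. This is false, and in fact your own formula should have alerted you: for fixed $k$, the quantity $2^{t(k-1)}/t!$ tends to \emph{zero} as $t\to\infty$, which cannot be the count of anything. What actually happens is that a rank-$t$ spike with $k$ circuit-hyperplanes is determined up to isomorphism by the Venn-diagram profile of those $k$ transversals, i.e.\ by the sizes of the $2^{k-1}$ cells of $(C_{1}\cap C_{k},\ldots,C_{k-1}\cap C_{k})$ inside $C_{k}$; permuting legs and swapping within legs lets you realise any two spikes with the same profile as isomorphic. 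Consequently both $m_{2t}$ and $x_{2t}$ are \emph{polynomial} in $t$: the paper obtains $m_{2t}=O(t^{2^{k-1}+2})$ and constructs $\Omega(t^{2^{k}-k-3})$ excluded minors (spikes with $k+1$ circuit-hyperplanes in which every element lies in some $C_{i}$ and avoids some $C_{j}$). The inequality $2^{k}-k-3>2^{k-1}+2$, equivalent to $2^{k-1}>k+5$, is exactly where the hypothesis $k\geq 5$ enters.

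So the template of \Cref{valley} does carry over, but only after you replace the orbit-counting heuristic by the Venn-diagram isomorphism invariant; without it, neither your upper bound on $m_{2t}$ nor your lower bound on $x_{2t}$ is correct, and the role of $k\geq 5$ remains invisible.
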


The reason the class in \Cref{memory} is not strongly fractal is that
eventually there are no excluded-minors with odd-cardinality ground sets
(\Cref{cheese}).
In other words, $\Gamma_{\mcal{S}_{k}}(2t+1)=0$ for all large-enough
values of $t$.
However $\Gamma_{\mcal{S}_{k}}(2t)$ converges to one.

\Cref{master} does not require the
subsequence that converges to one to have any particular
structure.
However, it is inconceivable that the following conjecture fails.

\begin{conjecture}
\label{drawer}
Let \mcal{M} be a weakly fractal class of matroids.
There exist integers $a$ and $b$ such that the
sequence
\[
\Gamma_{\mcal{M}}(a+b),\ 
\Gamma_{\mcal{M}}(2a+b),\ 
\Gamma_{\mcal{M}}(3a+b),\ldots
\]
converges to one.
\end{conjecture}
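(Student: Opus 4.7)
The plan is to combine a pigeonhole argument over residue classes with a structural \emph{lifting lemma} for minor-closed classes. Since $\mathcal{M}$ is weakly fractal, fix a subsequence $n_{1}<n_{2}<\cdots$ with $\Gamma_{\mathcal{M}}(n_{i})\to 1$, and for each $\epsilon>0$ let $L_{\epsilon}=\{n:\Gamma_{\mathcal{M}}(n)>1-\epsilon\}$, so each $L_{\epsilon}$ is infinite. I would first try to identify a modulus $a\geq 1$ such that some residue $b\pmod a$ is eventually captured by every $L_{\epsilon}$. The natural candidate for $a$ is obtained by coarsening the frequently occurring gaps $n_{i+1}-n_{i}$; by pigeonhole, any fixed $a$ forces the witnessing subsequence to hit some residue $b$ infinitely often, and the real goal is to promote this from ``infinitely often'' to ``eventually''.

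The promotion requires the lifting lemma: construct, for the chosen $a$, an injection $\Phi$ from $n$\dash element excluded minors of $\mathcal{M}$ to $(n+a)$\dash element excluded minors of $\mathcal{M}$, together with a matching bound showing that $m_{n+a}/m_{n}$ cannot grow too fast along the residue class $b$. The natural template for $\Phi$ is gluing on a fixed gadget (a parallel pair, a coloop, a pasted circuit, or a free extension) in a way that preserves the excluded-minor property. If such $\Phi$ and bound exist, then one obtains an inequality of the form $\Gamma_{\mathcal{M}}(n+a)\geq \Gamma_{\mathcal{M}}(n)-o(1)$ on the residue class $b\pmod a$, and iterating from the known subsequence witnesses then forces $\Gamma_{\mathcal{M}}(ka+b)\to 1$.

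The main obstacle is constructing $\Phi$ in the generality the conjecture demands. Minor-closed classes are not assumed to be closed under direct sums or any other obvious operation, so a universal gadget may simply not exist; one cannot take $M\mapsto M\oplus N$ for a fixed $N$ and expect to stay inside $\mathcal{M}$. Moreover $\mathcal{S}_{k}$ from \Cref{memory}, whose excluded minors vanish on odd-cardinality ground sets, shows that parity-type obstructions are genuine, which is precisely why an arithmetic progression rather than the whole sequence is the right target. A realistic first step is to prove the conjecture under an additional closure hypothesis (for example, closure under direct sum with a fixed uniform matroid, or a uniform bound on the connectivity of excluded minors), and to sanity-check that the resulting framework recovers $a=1,\ b=0$ for $\mathcal{P}_{k}$ and $a=2,\ b=0$ for $\mathcal{S}_{k}$. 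In the absence of any structural hypothesis I expect the conjecture to be genuinely hard, which is consistent with the authors phrasing it as merely ``inconceivable'' to fail rather than as a theorem.
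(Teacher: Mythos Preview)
The statement you were asked to prove is a \emph{conjecture}, not a theorem, and the paper offers no proof of it. The authors state only that it is ``inconceivable'' that it fails and verify it in the single instance $\mcal{S}_{k}$ (with $a=2$, $b=0$); beyond that they leave it open. So there is no proof in the paper to compare your proposal against.

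Your write-up correctly recognises this: you do not claim to have a proof, and you explicitly identify the central obstruction---that a minor-closed class carries no auxiliary closure property (direct sums, free extensions, gluing a fixed gadget) that would let you build a size-increasing injection $\Phi$ on excluded minors in general. That diagnosis is accurate. The pigeonhole step you describe is fine but weak: it gives a residue class hit infinitely often, which is exactly the hypothesis of weak fractality restated, not the conclusion. The real content would be the lifting lemma, and as you note there is no reason to believe one exists for an arbitrary minor-closed class. Your suggestion to attack the conjecture under an added structural hypothesis (closure under direct sum with a fixed matroid, or a connectivity bound on excluded minors) is a sensible research direction, but it is not a proof of the conjecture as stated---and neither is anything in the paper.
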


In the case of the class $\mcal{S}_{k}$, the integers
$a=2$ and $b=0$ satisfy \Cref{drawer}.

The only fractal classes we know of contain infinite
antichains, and we think this exemplifies a general pattern.

\begin{conjecture}
Any weakly fractal class of matroids contains an infinite antichain.
\end{conjecture}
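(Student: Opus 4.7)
The plan is to argue the contrapositive: if $\mathcal{M}$ is a minor-closed class of matroids containing no infinite antichain, then $\mathcal{M}$ is not weakly fractal. Since $\mathcal{M}$ is minor-closed and has no infinite antichain, the minor relation is a well-quasi-order on $\mathcal{M}$. The goal is to exploit this well-quasi-ordering to show that $x_{n}$ is dominated by $m_{n}$, so that $\Gamma_{\mathcal{M}}(n)$ is bounded strictly below $1$ for all sufficiently large $n$, contradicting \Cref{master}.

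The first line of attack is to aim for the strongest possible conclusion: that well-quasi-ordering of $\mathcal{M}$ forces the set of excluded minors to be \emph{finite}. This would settle the conjecture at once, since then $\Gamma_{\mathcal{M}}(n)=0$ for large $n$. It is also the behaviour exhibited in every well-understood well-quasi-ordered class of matroids (graphs, and matroids representable over a fixed finite field). The scheme is: assume for contradiction that $M_{1},M_{2},\ldots$ is an infinite sequence of pairwise non-isomorphic excluded minors, choose for each $i$ an element $e_{i}$ and set $N_{i}=M_{i}\setminus e_{i}\in\mathcal{M}$. By well-quasi-ordering of $\mathcal{M}$ there exist $i<j$ with $N_{i}$ a minor of $N_{j}$. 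The crucial step is to promote this relation to $M_{i}$ being a minor of $M_{j}$, which would contradict the fact that the excluded minors form an antichain.

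If the single-element-extension lift above proves too optimistic, the fall-back is a quantitative estimate. Every $n$-element excluded minor has at most $2n$ pairwise non-isomorphic proper minors of size $n-1$, each in $\mathcal{M}$. Conversely, the number of single-element extensions of a fixed $N\in\mathcal{M}$ that are excluded minors is controlled by the local structure of $N$. The idea is to use well-quasi-ordering of $\mathcal{M}$ to sort its members into finitely many ``structural types'', show that each type admits only polynomially many excluded-minor extensions, and combine this with a matroid-counting argument establishing that $m_{n}$ grows super-polynomially in any sufficiently rich minor-closed class. This would give $x_{n}=o(m_{n})$, again contradicting weak fractality.

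The dominant obstacle throughout is the absence, outside the finite-field-representable setting, of a general theory linking well-quasi-ordering of a minor-closed class of matroids to the combinatorics of its excluded minors. The step of lifting a minor relation on single-element deletions up to a minor relation on the original excluded minors is the matroid analogue of the subtle one-vertex-deletion argument in Robertson--Seymour, and no such general result is currently known for matroids. Any proof of the conjecture will therefore require genuinely new structural tools; the existing evidence in the paper (sparse paving matroids with bounded circuit-hyperplanes, and spikes) is encouraging precisely because both sources of fractal behaviour produce an infinite antichain essentially by construction.
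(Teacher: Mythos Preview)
The statement you are addressing is a \emph{conjecture} in the paper; the authors do not prove it, and offer only the remark that every fractal class they know contains an infinite antichain. There is therefore no proof in the paper to compare against.

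Your proposal is not a proof either, and to your credit you say so. The contrapositive reduction is correct: since the minor order on matroids is well-founded, ``no infinite antichain in $\mathcal{M}$'' is equivalent to $\mathcal{M}$ being well-quasi-ordered. But the two routes you outline both have genuine gaps that you yourself identify. In the first, the step from ``$N_{i}=M_{i}\backslash e_{i}$ is a minor of $N_{j}=M_{j}\backslash e_{j}$'' to ``$M_{i}$ is a minor of $M_{j}$'' is exactly the missing piece; nothing in the hypothesis controls how the deleted element $e_{j}$ interacts with the embedded copy of $N_{i}$, and in general a single-element extension of a minor need not be a minor of any single-element extension. It is not even known that well-quasi-ordering of a minor-closed class of matroids forces finitely many excluded minors; for finite fields this required Rota's conjecture in addition to the well-quasi-ordering theorem. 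In the second route, the phrases ``sort members into finitely many structural types'' and ``each type admits only polynomially many excluded-minor extensions'' are doing all the work and are left entirely unspecified; there is no known mechanism for extracting such a finite typing from well-quasi-ordering alone.

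So what you have written is a reasonable research outline, but it should be labelled as such rather than as a proof. The conjecture remains open.
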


We use $\mcal{P}(X)$ to denote the power set of the set $X$.
The symbol $\mbb{Z}_{\geq 0}$ stands for the set of non-negative
integers.
Our reference for matroid terms and concepts is \cite{Oxl11}.
Recall that a \emph{triangle} is a circuit of size three, and a
\emph{triad} is a $3$\dash element cocircuit.
A \emph{parallel pair} is a circuit of size two,
and a \emph{parallel class} is a maximal set such that every
$2$\dash element subset is a parallel pair.
A \emph{series pair} is a $2$\dash element cocircuit, and
a \emph{series class} is a maximal set with every $2$\dash element
subset being a series pair.
A \emph{thin} edge of a graph is a non-loop edge that is not in any
parallel pair.
Let $f(n)$ and $g(n)$ be functions taking integers as input and returning real
numbers as output.
If we say that $f(n)$ is bounded by $O(g(n))$, we mean that there
exist a constant $c$ and an integer $N$ such that $f(n)\leq cg(n)$ whenever
$n>N$.
Similarly, if $f$ is at least $\Omega(g(n))$, then there exist
$c$ and $N$ such that $f(n)\geq cg(n)$ whenever $n>N$.

\section{Sparse paving matroids}

A matroid is \emph{sparse paving} if every non-spanning
circuit is also a hyperplane.
Every matroid with rank or corank equal to zero is vacuously sparse paving.
Let $E$ be an $n$\dash element set, and let $r$ be an integer satisfying
$1\leq r\leq n-1$.
Rank\dash $r$ sparse paving matroids on the ground set $E$
are in bijective correspondence with families, \mcal{C}, of
$r$\dash element subsets of $E$, such that if
$C_{1}$ and $C_{2}$ are distinct members of \mcal{C}, then
$|C_{1} - C_{2}| > 1$.
If \mcal{C} is such a collection, then we use $M(\mcal{C})$ to denote the
sparse paving matroid on the ground set $E$ with \mcal{C} as its family of
circuit-hyperplanes.

For $k$ a non-negative integer, let $\mcal{P}_{k}$ denote the class of
sparse paving matroids with at most $k$ circuit-hyperplanes.
Note that $\mcal{P}_{0}$ is the class of uniform matroids.
Let \mcal{P} be the set of all sparse paving matroids.
Thus $\mcal{P} = \cup_{k\geq 0}\mcal{P}_{k}$.
Let $M=M(\mcal{C})$ be a sparse paving matroid on the ground set $E$,
and let $e$ be an element of $E$.
If $e$ is not a coloop in $M$ then
\[
M\ba e = M(\{C\colon C \in \mcal{C},\ e\notin C\})
\]
and if $e$ is not a loop, then
\[
M/e = M(\{C-e\colon C\in \mcal{C},\ e\in C\}).
\]
This demonstrates that $\mcal{P}_{k}$ is
a minor-closed class for any $k\geq 0$,
and so is \mcal{P}.

\begin{definition}
\label{crutch}
Let $k$ and $n$ be positive integers.
Let $\mcal{C}=(C_{1},\ldots, C_{k})$ be a sequence of subsets of $E$,
where $E$ is a set of cardinality $n$.
For every subset $I\subseteq \{1,\ldots, k\}$, define $\mcal{C}(I)$ to be
\[
\left(\bigcap_{i\in I} C_{i}\right)
\cap \left(\bigcap_{i\in \{1,\ldots, k\}-I}(E-C_{i})\right).
\]
Thus $\mcal{C}(I)$ contains those elements of $E$ that are in
$C_{i}$ for every $i\in I$, and in the complement of $C_{i}$
for every $i\notin I$.
This means that $(\mcal{C}(I))_{I\subseteq \{1,\ldots, k\}}$ is a
partition of $E$ (possibly containing empty blocks).
We think of $\mcal{C}(I)$ as being a cell in a Venn diagram with
$k$ sets.

Let $\psi_{\mcal{C}}$ be the function that takes $I$ to $|\mcal{C}(I)|$
for each $I\subseteq \{1,\ldots, k\}$.
Note that
\[
\sum_{I\subseteq \{1,\ldots, k\}}\psi_{\mcal{C}}(I) = n.
\]
\end{definition}

Let $k$ and $n$ be positive integers.
We describe a multi-valued function, $\mcal{R}_{k}^{n}$.
The domain of $\mcal{R}_{k}^{n}$ is the set of
$n$\dash element sparse paving matroids
with exactly $k$ circuit-hyperplanes.
The codomain is the set of functions from
$\mcal{P}(\{1,\ldots, k\})$ to $\mbb{Z}_{\geq 0}$.
The ordered pair $(M,\psi)$ is in $\mcal{R}_{k}^{n}$ if there is
some ordering $\mcal{C}=(C_{1},\ldots, C_{k})$ of the
circuit-hyperplanes of $M$ such that $\psi=\psi_{\mcal{C}}$.
We use $\mcal{R}_{k}^{n}(M)$ to denote
$\{\psi\colon (M,\psi)\in \mcal{R}_{k}^{n}\}$, the image of $M$ under
$\mcal{R}_{k}^{n}$.
Thus $\mcal{R}_{k}^{n}(M)$ contains
at most $k!$ functions.

\begin{proposition}
\label{sonnet}
Let $k$ and $n$ be positive integers.
Let $M$ and $N$ be $n$\dash element sparse paving matroids
with exactly $k$ circuit-hyperplanes.
Then $M$ and $N$ are isomorphic if and only if
$\mcal{R}_{k}^{n}(M)\cap \mcal{R}_{k}^{n}(N)\ne \emptyset$.
\end{proposition}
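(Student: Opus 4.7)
The plan is to treat the two directions of the biconditional separately, with the forward direction essentially transporting an ordering along an isomorphism and the backward direction constructing an isomorphism Venn-cell by Venn-cell.

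For the forward direction, I assume an isomorphism $\phi\colon M\to N$. I pick any ordering $\mathcal{C}=(C_1,\ldots,C_k)$ of the circuit-hyperplanes of $M$. Since $\phi$ sends circuit-hyperplanes to circuit-hyperplanes bijectively, the tuple $\phi(\mathcal{C})=(\phi(C_1),\ldots,\phi(C_k))$ is an ordering of the circuit-hyperplanes of $N$. Because $\phi$ is a bijection between the ground sets, it maps $\mathcal{C}(I)$ onto $\phi(\mathcal{C})(I)$ for every $I\subseteq\{1,\ldots,k\}$, so $\psi_{\mathcal{C}}=\psi_{\phi(\mathcal{C})}$. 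Hence this single function lies in both $\mcal{R}_k^n(M)$ and $\mcal{R}_k^n(N)$.

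For the backward direction, I suppose $\psi\in\mcal{R}_k^n(M)\cap\mcal{R}_k^n(N)$. By definition there are orderings $\mathcal{C}=(C_1,\ldots,C_k)$ of the circuit-hyperplanes of $M$ and $\mathcal{D}=(D_1,\ldots,D_k)$ of the circuit-hyperplanes of $N$ with $\psi_{\mathcal{C}}=\psi=\psi_{\mathcal{D}}$. Thus $|\mathcal{C}(I)|=|\mathcal{D}(I)|$ for every $I$. Since the families $(\mathcal{C}(I))_I$ and $(\mathcal{D}(I))_I$ partition $E(M)$ and $E(N)$ respectively, I can choose, for each $I$, an arbitrary bijection $\mathcal{C}(I)\to\mathcal{D}(I)$, and glue these into a bijection $\phi\colon E(M)\to E(N)$.

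The key observation is then that $\phi(C_i)=D_i$ for each $i$: indeed, $C_i=\bigsqcup_{I\ni i}\mathcal{C}(I)$ and likewise for $D_i$, and $\phi$ sends each constituent cell to its counterpart. So $\phi$ carries the set of circuit-hyperplanes of $M$ bijectively onto the set of circuit-hyperplanes of $N$. The final step is to invoke the characterisation recalled at the start of the section: an $n$-element sparse paving matroid of rank $r$ (with $1\le r\le n-1$) is determined by its ground set and its family of circuit-hyperplanes, so a set-bijection between the ground sets that matches these families is a matroid isomorphism. The common rank $r$ is forced by $r=|C_i|=\sum_{I\ni i}\psi(I)=|D_i|$, and $k\ge 1$ together with the existence of a circuit-hyperplane guarantees $1\le r\le n-1$, so the characterisation applies and $\phi$ is an isomorphism.

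I do not expect any serious obstacle here; the content is entirely book-keeping. The only point that requires any care is confirming that the ad hoc cell-by-cell bijection actually respects the matroid structure, which reduces cleanly to the sparse-paving characterisation via the identity $C_i=\bigsqcup_{I\ni i}\mathcal{C}(I)$.
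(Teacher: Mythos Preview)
Your proof is correct and follows essentially the same approach as the paper: transport an ordering along an isomorphism for the forward direction, and glue cell-by-cell bijections for the converse. If anything, your version is slightly more explicit than the paper's, which simply asserts that the resulting bijection carries circuit-hyperplanes to circuit-hyperplanes and is therefore an isomorphism; your unpacking via $C_i=\bigsqcup_{I\ni i}\mathcal{C}(I)$ and the rank check are exactly the details one would supply if pressed.
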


\begin{proof}
Let $\rho$ be an isomorphism from $M$ to $N$.
Let $\mcal{C}=(C_{1},\ldots, C_{k})$ be an ordering of the
circuit-hyperplanes in $M$.
Then $\rho(\mcal{C}) = (\rho(C_{1}),\ldots, \rho(C_{k}))$
is an ordering of the circuit-hyperplanes in $N$.
Because $\rho$ is a bijection, it is clear that
$|\mcal{C}(I)| = |\rho(\mcal{C})(I)|$ for every
$I\subseteq \{1,\ldots, k\}$.
Thus $\psi_{\mcal{C}} = \psi_{\rho(\mcal{C})}$, and it follows that
$\mcal{R}_{k}^{n}(M)\cap \mcal{R}_{k}^{n}(N)$ is not empty.

For the converse, we assume that
$\mcal{R}_{k}^{n}(M)\cap \mcal{R}_{k}^{n}(N)$ contains at least one
function.
This means that there must be orderings, $\mcal{C}_{M}$ and
$\mcal{C}_{N}$, of the circuit-hyperplanes in $M$ and $N$,
respectively, such that $\psi_{\mcal{C}_{M}} = \psi_{\mcal{C}_{N}}$.
For each $I\subseteq \{1,\ldots, k\}$, let $\pi_{I}$ be an arbitrary
bijection from $\mcal{C}_{M}(I)$ to $\mcal{C}_{M}(I)$.
Note that these sets have the same cardinality since
$\psi_{\mcal{C}_{M}}(I) = \psi_{\mcal{C}_{N}}(I)$, so $\pi_{I}$ exists.
We consider each $\pi_{I}$ to be a set of ordered pairs.
Now we can define $\pi$ to be
\[\bigcup_{I\subseteq \{1,\ldots, k\}}\pi_{I}.\]
Thus $\pi$ is a bijection from $E(M)$ to $E(N)$.
It is clear that $\pi(X)$ is a circuit-hyperplane of $N$
if and only if $X$ is a circuit-hyperplane of $M$.
Therefore $\pi$ is the desired isomorphism between $M$ and $N$.
\end{proof}

\begin{lemma}
\label{asylum}
Let $k$ be a non-negative integer.
The number of $n$\dash element matroids in $\mcal{P}_{k}$ is at most
$O(n^{2^{k}-1})$.
\end{lemma}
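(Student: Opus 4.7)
The plan is to stratify the $n$-element matroids in $\mcal{P}_k$ by the exact number $k'$ of circuit-hyperplanes they contain (with $0 \leq k' \leq k$), bound each stratum separately, and then sum. The stratum $k' = 0$ consists of the uniform matroids, of which there are exactly $n+1$ on an $n$-element ground set, contributing $O(n)$ to the total.

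For fixed $k'$ with $1 \leq k' \leq k$, the key step is to observe that \Cref{sonnet} yields an injection from isomorphism classes of $n$-element sparse paving matroids with exactly $k'$ circuit-hyperplanes into the set of functions $\psi \colon \mcal{P}(\{1,\ldots,k'\}) \to \mbb{Z}_{\geq 0}$ satisfying $\sum_{I \subseteq \{1,\ldots,k'\}} \psi(I) = n$. Indeed, the sets $\mcal{R}_{k'}^{n}(M)$ for non-isomorphic $M$ are non-empty and pairwise disjoint by \Cref{sonnet}; selecting one function from each such set provides the desired injection, and every function so selected satisfies the sum condition recorded at the end of \Cref{crutch}.

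It then remains to bound the size of the codomain. The number of functions $\psi \colon \mcal{P}(\{1,\ldots,k'\}) \to \mbb{Z}_{\geq 0}$ with $\sum_I \psi(I) = n$ is the number of weak compositions of $n$ into $2^{k'}$ parts, namely $\binom{n + 2^{k'} - 1}{2^{k'} - 1}$, which is a polynomial in $n$ of degree $2^{k'} - 1$ and hence $O(n^{2^{k'} - 1})$. Summing over $0 \leq k' \leq k$ produces the claimed bound $O(n^{2^k - 1})$, which for $k \geq 1$ absorbs the $O(n)$ contribution from the uniform matroids. The only substantive ingredient is the injection furnished by \Cref{sonnet}; the rest is a routine stars-and-bars computation, so I do not foresee a real obstacle.
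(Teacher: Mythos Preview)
Your proposal is correct and follows essentially the same approach as the paper: stratify $\mcal{P}_{k}$ by the exact number of circuit-hyperplanes, use \Cref{sonnet} to inject each stratum into the set of functions $\psi\colon\mcal{P}(\{1,\ldots,k'\})\to\mbb{Z}_{\geq 0}$ with $\sum_{I}\psi(I)=n$, count these via stars-and-bars as $\binom{n+2^{k'}-1}{2^{k'}-1}=O(n^{2^{k'}-1})$, and sum the constant number of strata. Your phrasing of the injection (picking a representative from each nonempty, pairwise-disjoint $\mcal{R}_{k'}^{n}(M)$) is exactly what the paper's inequality encodes.
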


\begin{proof}
We observe that
\[
\mcal{P}_{k} = (\mcal{P}_{k} - \mcal{P}_{k-1})\cup
(\mcal{P}_{k-1} - \mcal{P}_{k-2})\cup\cdots\cup
(\mcal{P}_{1} - \mcal{P}_{0})\cup\mcal{P}_{0}.
\]
Because $\mcal{P}_{0}$ is the class of uniform matroids, it follows that
the number of $n$\dash element matroids in $\mcal{P}_{0}$ is at most
$O(n)$.
We will show that the number of $n$\dash element matroids in
$\mcal{P}_{m}-\mcal{P}_{m-1}$ is at most $O(n^{2^{m}-1})$, and then the result will follow,
since the number of classes in the above union is constant relative to $n$.

From \Cref{sonnet}, it follows that for any positive $m$, the number of $n$\dash element
matroids in $\mcal{P}_{m}-\mcal{P}_{m-1}$ is at most the number of functions
$\psi\colon \mcal{P}(\{1,\ldots, m\})\to \mbb{Z}_{\geq 0}$ such that
$\sum_{I\subseteq \{1,\ldots, m\}}\psi(I) = n$.
By standard enumeration techniques, the number of such functions is
\[
\binom{n+2^{m}-1}{n} = \binom{n+2^{m}-1}{2^{m}-1}.
\]
Since $m$ is a constant, this binomial coefficient is bounded by $O(n^{2^{m}-1})$,
and we are done.
\end{proof}

\begin{lemma}
\label{collar}
Let $k\geq 3$ be an integer.
The number of $n$\dash element
excluded minors for $\mcal{P}_{k}$ is
at least $\Omega(n^{2^{k+1}-k-4})$.
\end{lemma}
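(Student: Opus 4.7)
The plan is to construct excluded minors for $\mcal{P}_{k}$ that are themselves sparse paving matroids with exactly $k+1$ circuit-hyperplanes, and to count them via the Venn-diagram parameterization of \Cref{crutch} (applied with $k+1$ in place of $k$).

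First, I would characterize the relevant excluded minors. If $M$ is a sparse paving matroid with circuit-hyperplanes $C_{1},\ldots,C_{k+1}$, the deletion and contraction formulas given at the start of this section show that $M\ba e$ has $k+1-|\{i\colon e\in C_{i}\}|$ circuit-hyperplanes (when $e$ is not a coloop), and $M/e$ has $|\{i\colon e\in C_{i}\}|$ circuit-hyperplanes (when $e$ is not a loop). Thus $M$ is an excluded minor for $\mcal{P}_{k}$ precisely when $M$ has no loop or coloop and every element lies in at least one but not all of the $C_{i}$. Setting $\mcal{C}=(C_{1},\ldots,C_{k+1})$, this is exactly the condition $\mcal{C}(\emptyset)=\mcal{C}(\{1,\ldots,k+1\})=\emptyset$.

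Next, I would fix the rank $r=\lfloor n/2\rfloor$ and set up the count. By \Cref{sonnet}, the number of non-isomorphic $n$\dash element excluded minors of rank $r$ produced this way is at least $1/(k+1)!$ times the number of functions $\psi\colon\mcal{P}(\{1,\ldots,k+1\})\to\mbb{Z}_{\geq 0}$ satisfying (a)~$\psi(\emptyset)=\psi(\{1,\ldots,k+1\})=0$, (b)~$\sum_{I}\psi(I)=n$, (c)~$\sum_{I\ni i}\psi(I)=r$ for each $i\in\{1,\ldots,k+1\}$, and (d)~$\sum_{I\colon i\in I,\,j\notin I}\psi(I)\geq 2$ for each pair $i\ne j$. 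Condition~(d) encodes the sparse paving axiom that any two distinct circuit-hyperplanes are at symmetric\dash difference distance at least $4$.

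After the two forced zeros in (a), the affine conditions (b) and (c) form $k+2$ linearly independent equations on the remaining $2^{k+1}-2$ cells, cutting out a real-affine solution set of dimension $2^{k+1}-k-4$. With $r=\lfloor n/2\rfloor$, its intersection with the non-negative orthant is a polytope whose vertices grow linearly in $n$, and hence it contains $\Theta(n^{2^{k+1}-k-4})$ lattice points. Each of the $\binom{k+1}{2}$ inequalities in (d) fails on at most a codimension-one face, contributing only $O(n^{2^{k+1}-k-5})$ lattice points in total; subtracting these still leaves $\Omega(n^{2^{k+1}-k-4})$ admissible $\psi$, and dividing by the constant $(k+1)!$ yields the claimed bound.

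The main obstacle is this lattice\dash point count, in particular verifying the linear independence of the $k+2$ equations (so that the feasible polytope really has dimension $2^{k+1}-k-4$ rather than collapsing) and checking that the sparse paving inequalities exclude only a lower-dimensional subset. A robust alternative to an Ehrhart-style argument is to exhibit an explicit $(2^{k+1}-k-4)$\dash parameter family: fix most cell sizes to suitable $\Theta(n)$ values that automatically satisfy (d), then allow the remaining $2^{k+1}-k-4$ cells to range over an integer region of the correct dimension subject to the residual linear constraints.
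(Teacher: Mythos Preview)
Your proposal is correct and your characterisation of the sparse paving excluded minors is exactly the paper's. The difference lies in how the counting is executed. You frame it as a lattice-point count in the polytope cut out by (a)--(d) and plan an Ehrhart-type argument, noting as an alternative the option of exhibiting an explicit $(2^{k+1}-k-4)$\dash parameter family. The paper takes precisely that alternative route: it places two elements in each singleton cell $\mcal{C}(\{i\})$, then for every $I$ with $2\le |I|\le k$ puts $\phi(x_I)$ elements in $\mcal{C}(I)$ together with $\phi(x_I)$ compensating elements in each $\mcal{C}(\{j\})$ for $j\notin I$. This device keeps the $C_i$ equicardinal and the separation condition satisfied automatically, so the whole count collapses to the single linear equation $\sum_{I}(k+2-|I|)\,x_I=n-2(k+1)$ in $2^{k+1}-k-3$ non-negative integer unknowns, whose solution count is then read off via a generating-function argument in the style of Schur's theorem. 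Your polytope approach is sound but requires verifying full-dimensionality (the uniform assignment $\psi(I)=n/(2^{k+1}-2)$ for $r=n/2$ does this) and handling the parity of $r=\lfloor n/2\rfloor$ in the Ehrhart asymptotics; the paper's explicit parametrisation sidesteps these checks at the cost of a less transparent description of the family.
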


\begin{proof}
Let \mcal{I} be the collection $\{I\subseteq \{1,\ldots, k+1\}\colon 2\leq I\leq k\}$.
Observe that $|\mcal{I}| = 2^{k+1}-k-3$.
For $s\in\{2,\ldots, k\}$, let $\mcal{I}^{s}$ denote the collection
of sets in \mcal{I} with cardinality $s$.
For each $I\in\mcal{I}$ we introduce a variable $x_{I}$.
We are going to consider non-negative integer solutions to the
equation
\begin{equation}
\label{eqn1}
k\sum_{I\in\mcal{I}^{2}}x_{I}
+(k-1)\sum_{I\in\mcal{I}^{3}}x_{I}
+\cdots+
2\sum_{I\in\mcal{I}^{k}}x_{I}
=n-2(k+1)
\end{equation}

\begin{claim}
\label{trench}
The number of non-negative integer solutions to \eqref{eqn1}
is at least $\Omega(n^{2^{k+1}-k-4})$.
\end{claim}

\begin{proof}
The proof of this \namecref{trench} is essentially the same as the proof of
Schur's Theorem given in \cite[Theorem 3.15.2]{Wil94}.
By standard techniques, we see that the number of
non-negative integer solutions is equal to the coefficient of
$z^{n-2(k+1)}$ in the generating function
\[
f(z)=
\left(\frac{1}{1-z^{k}}\right)^{\binom{k+1}{2}}
\left(\frac{1}{1-z^{k-1}}\right)^{\binom{k+1}{2}}\cdots
\left(\frac{1}{1-z^{2}}\right)^{\binom{k+1}{k}}.
\]
Every pole of $f(z)$ is a root of unity.
In particular, the denominator of $f(z)$ has as a factor
\[
(1-z)^{\binom{k+1}{2}+\binom{k+1}{3}+\cdots + \binom{k+1}{k}} = (1-z)^{2^{k+1}-k-3}
\]
which shows that $z=1$ is a pole with multiplicity
$2^{k+1}-k-3$.
If $s$ is an integer greater than one, then $s$ does not divide all the
values in $2,\ldots, k$.
In this case, if $\omega$ is an $s$\dash th root of unity and $z=\omega$ is a
pole of $f(z)$, then its multiplicity is less than $2^{k+1}-k-3$.
So $f(z)$ has a pole of multiplicity $2^{k+1}-k-3$ at $z=1$, and the
multiplicity of every other pole is less than this value.
Now the arguments in \cite[Theorem 3.15.2]{Wil94} shows that
the number of non-negative integer solutions is asymptotically
equal to $n^{2^{k+1}-k-4}$, and this gives us the desired result.
\end{proof}

Let $\phi$ be an arbitrary solution to \eqref{eqn1}.
Thus $\phi$ takes the variables $\{x_{I}\}_{I\in\mcal{I}}$ to non-negative
integers, and 
\[
k\sum_{I\in\mcal{I}^{2}}\phi(x_{I})
+(k-1)\sum_{I\in\mcal{I}^{3}}\phi(x_{I})
+\cdots+
2\sum_{I\in\mcal{I}^{k}}\phi(x_{I})
=n-2(k+1).
\]
We will construct a sequence $\mcal{C}=(C_{1},\ldots, C_{k+1})$,
of subsets of $\{1,\ldots, n\}$ such that:
\begin{enumerate}[label=\textup{(\roman*)}]
\item $C_{1},\ldots, C_{k+1}$ are equicardinal,
\item  $|C_{i}-C_{j}|>1$ when $i$ and $j$ are distinct,
\item $\psi_{\mcal{C}}(I) = \phi(x_{I})$ for every $I\in \mcal{I}$, and
\item the sparse paving matroid $M(\mcal{C})$ is an $n$\dash element excluded minor for
$\mcal{P}_{k}$.
\end{enumerate}

We construct $\mcal{C}=(C_{1},\ldots, C_{k+1})$ by allocating each element
in $\{1,\ldots, n\}$ to a unique set of the form $\mcal{C}(I)$ for
some $I\subseteq \{1,\ldots, k+1\}$.
We start by allocating two elements to each set of the form
$\mcal{C}(\{i\})$, for $i\in \{1,\ldots, k+1\}$.
This ensures that statement (ii) holds.
We now have $n-2(k+1)$ elements left to allocate.
We will allocate no elements to $\mcal{C}(\emptyset)$ or $\mcal{C}(\{1,\ldots, k+1\})$,
so every element is in at least one of the sets $(C_{1},\ldots, C_{k+1})$,
and no element is in all of them.

We process each subset $I\in \mcal{I}$ in turn.
We allocate $\phi(x_{I})$ elements to
$\mcal{C}(I)$, and then for each $i\in \{1,\ldots, k+1\} - I$, we allocate a
further $\phi(x_{I})$ elements to $C(\{i\})$.
We have thus allocated an additional $\phi(x_{I})$ elements to each set in
$(C_{1},\ldots, C_{k+1})$, ensuring the sets remain equicardinal during this process.
Note that the number of elements we have allocated while processing
$I$ is $\phi(x_{I})+((k+1)-|I|)\phi(x_{I})$.
After processing every subset in \mcal{I}, the number of elements we have allocated is
therefore
\[
k\sum_{I\in\mcal{I}^{2}}\phi(x_{I})
+(k-1)\sum_{I\in\mcal{I}^{3}}\phi(x_{I})
+\cdots+
2\sum_{I\in\mcal{I}^{k}}\phi(x_{I}) = n-2(k+1).
\]
Hence all $n$ elements have now been allocated, and the sets
$(C_{1},\ldots, C_{k+1})$ are equicardinal, and satisfy $|C_{i}-C_{j}|>1$
when $i$ and $j$ are distinct.
Furthermore, our method of construction ensures that
$\psi_{\mcal{C}}(I) = \phi(x_{I})$ for every $I\in \mcal{I}$.
Since each element $e\in \{1,\ldots, n\}$ is in at least one of the sets
in \mcal{C}, but not all of them, it follows that $M(\mcal{C})\ba e$ and
$M(\mcal{C})/e$ both have at most $k$ circuit-hyperplanes, while
$M(\mcal{C})$ itself has $k+1$.
Thus $M(\mcal{C})$ is an excluded minor for $\mcal{P}_{k}$,
as desired.

The number of excluded minors we have constructed in this way is
$\Omega(n^{2^{k+1}-k-4})$ by \Cref{trench}.
Some of these excluded minors may be isomorphic copies of the same matroid.
But because $\mcal{R}_{k+1}^{n}(M)$ is no larger than $(k+1)!$ for
any excluded minor $M$, \Cref{sonnet} implies that
any isomorphism class of excluded minors corresponds to no more
than $(k+1)!$ solutions to \eqref{eqn1}.
As $k$ is fixed with respect to $n$, dividing a function that is at least
$\Omega(n^{2^{k+1}-k-4})$ by $(k+1)!$ produces another such function,
so the proof of \Cref{collar} is complete.
\end{proof}

\begin{proof}[Proof of \textup{\Cref{valley}}.]
From \Cref{asylum,collar}, it follows that there are constants $c_{1}$ and
$c_{2}$ such that for sufficiently large values of $n$ we have
\[
\Gamma_{\mcal{P}_{k}}(n)\geq
\frac{c_{1}n^{2^{k+1}-k-4}}{c_{2}n^{2^{k}-1}+c_{1}n^{2^{k+1}-k-4}}
=\frac{1}{(c_{2}/c_{1})n^{-2^{k}+k+3}+1}.
\]
Since $k\geq 3$, it follows that
$-2^{k}+k+3$ is negative, and hence
$\Gamma_{\mcal{P}_{k}}(n)$ tends to one as $n$ tends to infinity.
\end{proof}

\section{Spikes}

We describe spikes and their minors using \emph{biased graphs}.
Let $G$ be an undirected graph, which may contain loops and parallel edges.
A \emph{theta-subgraph} of $G$ consists of two distinct vertices, $u$ and $v$,
and three paths from $u$ to $v$ that do not share any vertices other than $u$ and $v$.
A \emph{linear class} is a collection, \mcal{B}, of cycles of $G$ satisfying the constraint
that no theta-subgraph of $G$ contains exactly two cycles in \mcal{B}.
In this case, we say that the pair $(G,\mcal{B})$ is a \emph{biased graph}.
The cycles in \mcal{B} are \emph{balanced} and any other cycle is \emph{unbalanced}.
A subgraph is \emph{unbalanced} if it contains an unbalanced
cycle, and otherwise it is \emph{balanced}.
Let $E$ be the edge-set of $G$.
We similarly say that $X\subseteq E$ is balanced if the
subgraph $G[X]$ is balanced, and otherwise $X$ is unbalanced.

\emph{Lift matroids} were introduced by Zaslavsky \cite{Zas91}.
The lift matroid, $L(G,\mcal{B})$, has $E$ as its ground set.
The set $X\subseteq E$ is a circuit of $L(G,\mcal{B})$ if and only if $G[X]$
is either: (i) a balanced cycle, (ii) an unbalanced theta-subgraph, or
(iii) a pair of unbalanced cycles with at most one vertex in common.
The rank of $L(G,\mcal{B})$ is equal to the number of vertices in $G$,
minus the number of balanced connected components.
A set, $X\subseteq E$, is a hyperplane of $L(G,\mcal{B})$ if and only if
$X$ is a maximal balanced set, or is unbalanced and is a hyperplane
of the graphic matroid $M(G)$ \cite[Theorem 3.1]{Zas91}.
Naturally, this characterises the cocircuits of $L(G,\mcal{B})$.

The next result is well known, but we include the
proof for completeness.

\begin{proposition}
\label{ritual}
Let $e$ be an element of the matroid $M$, and assume that $M/e = M(G)$
for some graph $G$.
Let $G^{e}$ be the graph obtained from $G$ by adding the loop $e$ incident
with an arbitrary vertex.
Let \mcal{B} be the collection of  cycles in $G$ such that $C\in \mcal{B}$
if and only if the edge-set of $C$ is a circuit of $M$.
Then \mcal{B} is a linear class of $G^{e}$ and $M=L(G^{e},\mcal{B})$.
\end{proposition}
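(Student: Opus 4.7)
The plan is to proceed in two stages. First, verify that $\mcal{B}$ is a linear class of $G^e$. Second, show that $M = L(G^e,\mcal{B})$, either by reducing to a direct-sum observation when $e$ is a loop of $M$, or, in all other cases, by comparing rank functions.

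For the first stage, observe that $e$ is a loop of $G^e$ and therefore lies in no theta-subgraph, so the theta-subgraphs of $G^e$ are exactly those of $G$. Consider a theta in $G$ with cycles $C_1, C_2, C_3$ and edge sets $X_1, X_2, X_3$, and suppose $C_1, C_2 \in \mcal{B}$, so that $X_1$ and $X_2$ are circuits of $M$. The shared path $X_1 \cap X_2$ is independent in $M/e = M(G)$, and because $r_M(S) \geq r_{M/e}(S)$ for every set $S$, it is independent in $M$ as well. Submodularity then gives
\[
r_M(X_1 \cup X_2) \leq (|X_1|-1) + (|X_2|-1) - |X_1 \cap X_2| = |X_1 \cup X_2| - 2,
\]
and the nullity of the theta in $M(G)$ provides the matching lower bound. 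Assuming $e$ is not a loop of $M$ (the loop case makes every cycle balanced and the claim trivial), one computes $r_M(X_1 \cup X_2 \cup \{e\}) = r_{M/e}(X_1 \cup X_2) + 1 = |X_1 \cup X_2| - 1$, so $e \notin \operatorname{cl}_M(X_1 \cup X_2)$. If $X_3$ were independent in $M$ then $X_3 \cup \{e\}$ would be a circuit of $M$, placing $e \in \operatorname{cl}_M(X_3) \subseteq \operatorname{cl}_M(X_1 \cup X_2)$ and contradicting the previous line. So $X_3$ is dependent in $M$; and since every proper subset of $X_3$ contains no cycle of $G$, such a subset is independent in both $M/e$ and $M$, so $X_3$ is a circuit and $C_3 \in \mcal{B}$.

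For the second stage, continue to assume $e$ is not a loop of $M$, and begin by checking that $L(G^e,\mcal{B})/e = M(G)$: removing $e$ from each of the three circuit types of the lift matroid leaves candidates whose minimal elements are exactly the edge sets of cycles of $G$. Hence both $M$ and $L(G^e,\mcal{B})$ are single-element coextensions of $M(G)$ by $e$, and they coincide as soon as they agree on the indicator $S \mapsto [e \in \operatorname{cl}(S)]$ for $S \subseteq E(G)$. In $L(G^e,\mcal{B})$, the unbalanced loop $e$ satisfies $e \in \operatorname{cl}_L(S)$ iff $S$ contains the edge set of an unbalanced cycle of $G$, directly from the circuit definition. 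The same equivalence holds in $M$: if $S$ contains an unbalanced cycle $C$, then the standard characterisation of circuits of $M/e$ forces $X_C \cup \{e\}$ to be a circuit of $M$, putting $e$ in $\operatorname{cl}_M(X_C) \subseteq \operatorname{cl}_M(S)$; conversely, any circuit of $M$ witnessing $e \in \operatorname{cl}_M(S)$ has the form $X \cup \{e\}$ with $X \subseteq S$, and minimality forces $X$ to be the edge set of an unbalanced cycle of $G$ (if $X$ contained a balanced cycle, that cycle's edge set would itself be a circuit of $M$ properly inside $X \cup \{e\}$, a contradiction).

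The main obstacle will be the last half of the closure argument: verifying that every circuit of $M$ passing through $e$ really takes the form $X \cup \{e\}$ for the edge set $X$ of an unbalanced cycle of $G$. This is essentially the standard description of circuits of $M/e$ read in reverse, and once it is in place the routine ingredients -- submodularity, the inequality $r_M \geq r_{M/e}$, and the computation $L(G^e,\mcal{B})/e = M(G)$ -- slot in without incident.
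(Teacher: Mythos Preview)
Your argument for the non-loop case is correct and takes a genuinely different route from the paper's. For the linear-class claim, the paper applies circuit elimination: given balanced cycles $C_{1},C_{2}$ in a theta sharing an edge $x$, it eliminates $x$ to obtain a circuit of $M$ inside $(C_{1}\cup C_{2})-x$, and then uses $M/e=M(G)$ to identify this circuit with the third cycle. You instead pin down $r_{M}(X_{1}\cup X_{2})$ by squeezing it between the submodular upper bound and the $r_{M/e}$ lower bound, deduce $e\notin\operatorname{cl}_{M}(X_{1}\cup X_{2})$, and derive a contradiction from independence of $X_{3}$. For the equality $M=L(G^{e},\mcal{B})$, the paper compares circuits directly in both directions, working through the three circuit types of the lift matroid case by case. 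You instead observe that both matroids are single-element coextensions of $M(G)$ by $e$, and hence coincide once the indicator $S\mapsto [e\in\operatorname{cl}(S)]$ agrees on each side; you then verify this one condition. Your approach is tidier in that it isolates the single datum distinguishing two coextensions, whereas the paper's approach has the virtue of being pure circuit reasoning with no rank bookkeeping.

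One caution: your passing remark that the case where $e$ is a loop of $M$ is handled by a ``direct-sum observation'' is not right. If $e$ is a loop of $M$ then every cycle of $G$ lies in \mcal{B}, so the only unbalanced cycle of $G^{e}$ is the loop $e$ itself; there is then no circuit of $L(G^{e},\mcal{B})$ through $e$, so $e$ is a \emph{coloop} of $L(G^{e},\mcal{B})$ rather than a loop, and $M\ne L(G^{e},\mcal{B})$. In other words the \namecref{ritual} as stated is only valid when $e$ is not a loop of $M$, and that is the only situation in which it is applied later. The paper's own proof carries the same tacit assumption: its last paragraph asserts that $C-e$ is a circuit of $M/e$ whenever $e\in C$, which fails for $C=\{e\}$.
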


\begin{proof}
Let $X$ be a set of edges such that $G[X]$ is a theta-subgraph.
Assume $G[X]$ contains two cycles in \mcal{B}.
Hence there are distinct circuits $C_{1}$ and $C_{2}$ of $M$ that are contained
in $X$.
Two cycles in a theta-subgraph must contain a common edge,
so we assume that $x$ is in $C_{1}\cap C_{2}$, and that
therefore $(C_{1}\cup C_{2})-x$ contains a circuit, $C_{3}$, of $M$.
Note that $C_{3}\subseteq X$, and $C_{3}$ is a union of circuits in
$M/e = M(G)$.
But $G[X]$ contains exactly one cycle that does not contain $x$:
the third cycle in the theta-subgraph.
Therefore $C_{3}$ is the edge-set of this cycle, which implies that $G[X]$
contains three cycles in \mcal{B}.
This shows that \mcal{B} is a linear class.

Let $C$ be a circuit of $L(G^{e},\mcal{B})$.
If $G[C]$ is a balanced cycle, then $C$ is also a circuit of $M$.
Assume that $G[C]$ consists of two unbalanced cycles that share at
most one vertex.
If one of these cycles is the loop $e$, then $C$ is also a circuit in $M$,
so we will assume $e$ is not in $C$.
Then there are disjoint circuits $C_{1}$ and $C_{2}$ in $M/e$
such that $C=C_{1}\cup C_{2}$ and both $C_{1}\cup e$ and
$C_{2}\cup e$ are circuits of $M$.
By circuit elimination, $C_{1}\cup C_{2}$ contains a circuit,
$C_{3}$, of $M$.
Hence $C_{3}$ is a union of circuits in $M/e$.
But any circuit of $M/e=M(G)$ contained in
$C_{3}$ is either $C_{1}$ or $C_{2}$, so $C_{3}$ is either
equal to one of these circuits, or to their union.
The first case is impossible, as $C_{1}$ and $C_{2}$ are not circuits of
$M$.
Hence $C_{3}=C_{1}\cup C_{2} = C$ is also a circuit in $M$.

Now assume that $C$ is the edge-set of an unbalanced theta-subgraph.
Let $C_{1}$ and $C_{2}$ be the edge-sets of two distinct cycles
in the theta-subgraph.
As before, $C_{1}\cup C_{2}$ contains a circuit, $C_{3}$, of $M$,
and $C_{3}$ is a union of circuits in $M/e=M(G)$.
Since $C_{1}$ and $C_{2}$ are not circuits of $M$,
there are only two possibilities: $C_{3}$ is the edge-set of the third cycle in the
theta-subgraph (which is impossible as the theta-subgraph is unbalanced),
or $C_{3}$ is the entire theta-subgraph.
We deduce that $C$ is also a circuit in $M$.

Now we know that every circuit of $L(G^{e},\mcal{B})$ is also a
circuit in $M$, so to complete the proof it suffices to show that every circuit of
$M$ contains a circuit of $L(G^{e},\mcal{B})$.
Let $C$ be a circuit of $M$.
If $e$ is in $C$, then $C-e$ is a circuit of $M/e=M(G)$, so
$C-e$ is the edge-set of an unbalanced cycle.
In this case $C$ is the union of two unbalanced cycles,
one of them being the loop $e$, so $C$ is a circuit in $L(G^{e},\mcal{B})$.
Hence we assume $e\notin C$.
Now $C$ is the edge-set of a union of cycles in $G$.
If any of these is a balanced cycle, then $C$ contains a circuit
in $L(G^{e},\mcal{B})$.
If the union contains only one unbalanced cycle, then $C$ is independent
in $M$, a contradiction.
Thus the union contains at least two unbalanced cycles.
It is now easy to see that it therefore contains a theta-subgraph,
or two cycles that share at most one vertex.
Thus $C$ contains a circuit of $L(G^{e},\mcal{B})$ and we are done.
\end{proof}

\begin{definition}
\label{bounce}
Let $r\geq 3$ be an integer, and let $\Delta_{r}$ be the graph obtained
from a cycle with $r$ edges by replacing each edge with a parallel pair.
A \emph{(tipless) spike} is a matroid of the form $L(\Delta_{r},\mcal{B})$, where
\mcal{B} is a linear class of Hamiltonian cycles.
Let \mcal{S} denote the class of matroids that are isomorphic to minors of spikes.
Let $k$ be a non-negative integer.
We use $\mcal{S}_{k}$ to denote the class of matroids that are isomorphic
to minors of spikes of the form $L(\Delta_{r},\mcal{B})$,
where \mcal{B} contains at most $k$ Hamiltonian cycles.
Therefore $\mcal{S} = \cup_{k\geq 0}\ \mcal{S}_{k}$.
\end{definition}

Recall that a \emph{cyclic flat} is a flat that is a (possibly empty) union of circuits.
A set $X$ is dependent if and only if $|X\cap Z| > r(Z)$ for some
cyclic flat $Z$, so any matroid is determined by its cyclic flats and their ranks.
It is an easy exercise to prove the following result.

\begin{proposition}
\label{charge}
Let $r\geq 3$ be an integer, and let \mcal{B} be a linear class
of Hamiltonian cycles in $\Delta_{r}$.
The cyclic flats of $L(\Delta_{r},\mcal{B})$ are as follows:
\begin{enumerate}[label=\textup{(\roman*)}]
\item the ground set is a cyclic flat of rank $r$,
\item the empty set is a cyclic flat of rank zero,
\item the edge-set of each cycle in \mcal{B} is a cyclic flat of rank $r-1$,
\item Any set of $p$ parallel pairs is a cyclic flat of rank $p+1$,
when $2\leq p \leq r-2$.
\end{enumerate}
\end{proposition}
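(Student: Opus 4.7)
My plan is first to identify the circuits of $L(\Delta_{r},\mcal{B})$ explicitly, and then to carry out a case analysis on which kinds of circuits a cyclic flat can contain.

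I would begin by classifying the cycles of $\Delta_{r}$. Since each vertex of $\Delta_{r}$ has degree four and only two neighbours, a cycle either consists of exactly one parallel pair (call it a \emph{digon}), or is a Hamiltonian cycle using one edge from each of the $r$ parallel pairs. Because $\mcal{B}$ contains only Hamiltonian cycles, every digon is unbalanced. A theta-subgraph of $\Delta_{r}$ must moreover be anchored between a pair of adjacent vertices $v_{i},v_{i+1}$: between non-adjacent vertices there are only two internally-disjoint paths, while between adjacent vertices there are exactly three (the two edges of the digon there, plus the unique path going the long way around). Plugging these facts into Zaslavsky's circuit characterisation of lift matroids shows that the circuits of $L(\Delta_{r},\mcal{B})$ are exactly: (a) the balanced Hamiltonian cycles, i.e.\ the members of $\mcal{B}$, of size $r$; (b) the unbalanced thetas at adjacent vertex pairs, of size $r+1$; and (c) the unions of two distinct digons, of size~$4$.

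Next I would verify that each set listed in (i)--(iv) is a cyclic flat. Items (i) and (ii) are immediate. For (iii), given $H\in\mcal{B}$, any $e\notin H$ has its parallel partner $e'$ in $H$, so $H\cup\{e\}$ is an unbalanced theta (two length-one paths $e,e'$ together with the rest of $H$), hence a type-(b) circuit of rank $r$; thus $e\notin\operatorname{cl}(H)$, and $H$ is a flat. For (iv), fix a set $X$ of $p$ digons with $2\leq p\leq r-2$ and pick a transversal $X_{0}\subseteq X$ containing one edge from each digon. Because $p<r$, the projection of $X_{0}$ to the underlying $r$-cycle is a proper sub-path-union, so $X_{0}$ is acyclic and independent of rank $p$. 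Adjoining the parallel partner $f$ of some $e\in X_{0}$ creates only the single new cycle $\{e,f\}$, which is a digon but not a circuit of $L$, so the rank rises to $p+1$; any subsequent partner $g$ of some $e'\in X_{0}$ then lies in the pair-of-digons circuit $\{e,f,e',g\}$, so the rank stays at $p+1$. Hence $\operatorname{rk}(X)=p+1$. For closure, any $h\notin X$ lies in a digon disjoint from $X$; because $X$ covers at most $r-2$ of the $r$ edges of the underlying cycle, the two endpoints of $h$ lie in distinct components of $X$, so $X\cup\{h\}$ has no cycle (and hence no circuit of $L$) through $h$, giving $h\notin\operatorname{cl}(X)$.

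Finally I would show that no other cyclic flats exist. Let $Z$ be a cyclic flat with $\emptyset\ne Z\ne E$. If $Z$ contains a type-(b) circuit $T$, then $\operatorname{rk}(T)=r$, so $\operatorname{cl}(Z)\supseteq\operatorname{cl}(T)=E$, contradicting $Z\ne E$. If $Z$ contains a type-(a) circuit $H\in\mcal{B}$, then any $e\in Z\setminus H$ would make $H\cup\{e\}$ a type-(b) circuit inside $Z$, reducing to the previous case; so $Z=H$, giving case (iii). Otherwise every circuit of $L$ in $Z$ is a pair of digons, and $Z$ is the union of some $p\geq 2$ digons; the rank computation of (iv) extends to give $\operatorname{rk}(Z)=p+1$ whenever $p\leq r-1$, so $p=r-1$ would yield $\operatorname{cl}(Z)=E\ne Z$. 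Hence $2\leq p\leq r-2$ and $Z$ is of type (iv). I expect the main obstacle to be this rank--closure analysis for unions of digons, and in particular seeing cleanly why $p\leq r-2$ is sharp: for $p=r-1$ the ``transversal plus one partner'' construction already produces a basis of the matroid, so the set has full rank and its closure jumps up to the entire ground set. The remaining verifications amount to a routine cross-check against the circuit description of lift matroids.
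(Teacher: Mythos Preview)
The paper does not actually prove this proposition---it merely says it is ``an easy exercise''---so there is no argument to compare against. Your approach via an explicit circuit list followed by case analysis is the natural one and is largely sound, but there is a genuine error that appears twice.

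You claim that for $H\in\mcal{B}$ and $e\notin H$, the set $H\cup\{e\}$ is a type-(b) circuit. This is false: the theta $H\cup\{e\}$ contains the \emph{balanced} cycle $H$, so it properly contains a circuit and hence cannot itself be one. In the circuit description of a lift matroid, a theta-subgraph is a circuit only when \emph{none} of its three cycles is balanced; this is strictly stronger than merely containing an unbalanced cycle, and indeed every theta in $\Delta_{r}$ contains an unbalanced digon. Your initial list of circuits is therefore slightly mis-stated as well: type-(b) circuits are only those $(r+1)$-edge thetas in which \emph{neither} of the two Hamiltonian cycles lies in $\mcal{B}$.

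Fortunately the conclusions you need still hold. For item (iii), the set $H\cup\{e\}$ is connected, unbalanced (it contains the digon $\{e,e'\}$), and meets all $r$ vertices, so its lift rank is $r$; hence $e\notin\operatorname{cl}(H)$. Equivalently, the linear-class condition forces $H-e'+e\notin\mcal{B}$ (otherwise the theta would contain exactly two balanced cycles), so no circuit through $e$ lies inside $H\cup\{e\}$. The same rank argument repairs the converse: any cyclic flat strictly containing some $H\in\mcal{B}$ has rank $r$, so its closure is $E$. With this correction your proof goes through; the remaining rank and closure computations for unions of digons, including the sharpness at $p=r-1$, are correct as written.
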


Let $r\geq 3$ be an integer, and let $C$ be a Hamiltonian cycle of $\Delta_{r}$.
Let $C^{*}$ be the Hamiltonian cycle that contains no edges in common with $C$.
If \mcal{B} is a linear class of Hamiltonian cycles, then $\mcal{B}^{*}$ is
the linear class $\{C^{*}\colon C\in \mcal{B}\}$.
It is well-known that the cyclic flats of $M^{*}$ are exactly the complements
of cyclic flats of $M$.
Now the next result is easy to check.

\begin{proposition}
\label{attack}
Let $r\geq 3$ be an integer, and let \mcal{B} be a linear class of
Hamiltonian cycles of $\Delta_{r}$.
Then $(L(\Delta_{r},\mcal{B}))^{*} = L(\Delta_{r},\mcal{B}^{*})$.
Consequently, $\mcal{S}_{k}$  is closed under duality
for each $k\geq 0$, and so is \mcal{S}.
\end{proposition}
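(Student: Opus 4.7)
The plan is to apply \Cref{charge} twice---once to $L(\Delta_{r},\mcal{B})$ and once to $L(\Delta_{r},\mcal{B}^{*})$---and exploit two standard facts: the cyclic flats of $M^{*}$ are exactly the complements of the cyclic flats of $M$, with dual ranks determined by $r^{*}(X)=|X|+r(E\ba X)-r(M)$, and a matroid is determined by its cyclic flats together with their ranks. Before invoking \Cref{charge} on $L(\Delta_{r},\mcal{B}^{*})$, I first need to verify that $\mcal{B}^{*}$ really is a linear class. The only theta-subgraphs of $\Delta_{r}$ are built from a parallel pair $\{e,f\}$ together with a Hamiltonian path joining its endpoints that avoids $e$ and $f$; such a theta contains the 2-cycle $\{e,f\}$ plus exactly two Hamiltonian cycles of $\Delta_{r}$. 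The involution $C\mapsto C^{*}$ sends this pair of Hamiltonian cycles bijectively to the pair lying inside the theta built from $\{e,f\}$ and the Hamiltonian path with all of its parallel-pair choices flipped, so the forbidden configuration ``exactly two cycles of the class appear in a common theta'' is preserved under $\mcal{B}\mapsto\mcal{B}^{*}$. This is the one step of the argument that requires any thought; everything else is bookkeeping.

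Next I would run through the four families of cyclic flats from \Cref{charge} and take complements inside the $2r$-element ground set. Families (i) and (ii) exchange. For (iii), the complement of the edge-set of $C\in\mcal{B}$ is exactly the edge-set of $C^{*}\in\mcal{B}^{*}$, since $C$ and $C^{*}$ partition the edges of $\Delta_{r}$. For (iv), the complement of a union of $p$ parallel pairs is a union of the remaining $r-p$ parallel pairs, and the constraint $2\leq p\leq r-2$ is symmetric under $p\mapsto r-p$, so the complement remains in range. The ranks match: the complement of a set in (iii) has $r$ elements and dual rank $r+(r-1)-r=r-1$, agreeing with \Cref{charge}(iii) applied to $L(\Delta_{r},\mcal{B}^{*})$; and the complement of a set in (iv) is a union of $r-p$ parallel pairs with $2(r-p)$ elements and dual rank $2(r-p)+(p+1)-r=(r-p)+1$, agreeing with \Cref{charge}(iv). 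This completes the identification $(L(\Delta_{r},\mcal{B}))^{*}=L(\Delta_{r},\mcal{B}^{*})$. For the consequence, note that $|\mcal{B}|=|\mcal{B}^{*}|$, so duality sends a spike with at most $k$ circuit-hyperplanes to another such spike; combined with the fact that duality commutes with taking minors, this gives closure of $\mcal{S}_{k}$ and of $\mcal{S}$ under duality.
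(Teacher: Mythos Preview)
Your proposal is correct and follows essentially the same approach the paper sketches: the paper states the proposition without a formal proof, merely noting beforehand that the cyclic flats of $M^{*}$ are the complements of those of $M$ and that the result is then ``easy to check'' from \Cref{charge}. You have supplied exactly those details, and your extra care in verifying that $\mcal{B}^{*}$ is indeed a linear class (which the paper asserts without justification) is a welcome addition.
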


Although the set of spikes is not closed under taking minors,
we are able to give an explicit description of all the matroids
in $\mcal{S}_{k}$.
This is accomplished in \Cref{safety}, which we now move towards proving.

The following description of minor operations on lift matroids follows from
\cite[Theorem 3.6]{Zas91}.
Let \mcal{B} be a linear class of cycles in the graph $G$, and
let $e$ be an edge of $G$.
We define $\mcal{B}\ba e$ to be the collection of cycles in \mcal{B} that do not
contain $e$.
Then $L(G,\mcal{B})\ba e = L(G\ba e,\mcal{B}\ba e)$.
If $e$ is not a loop, then we define $\mcal{B}/e$ to be the collection of
cycles in $G/e$ with edges sets of the form $E(C)-e$, where $C$ is a cycle in \mcal{B}
that may or may not contain $e$.
With this definition, the equality
$L(G,\mcal{B})/e = L(G/e,\mcal{B}/e)$ holds.
If $e$ is a balanced loop, then $L(G,\mcal{B})/e = L(G,\mcal{B})\ba e$,
and if $e$ is an unbalanced loop, then
$L(G,\mcal{B})/e$ is equal to $M(G\ba e)$, the cycle matroid of $G\ba e$.
Since any cycle matroid can be expressed as a lift matroid
(by making every cycle balanced), these observations show that the class of lift matroids
is minor-closed.

We recall that graphs may contain loops and parallel edges.
Let \mcal{G} be the class of graphs containing:
\begin{enumerate}[label=\textup{(\roman*)}]
\item any graph with a single vertex,
\item any connected graph with exactly two vertices, and at most four edges
joining them,
\item any graph whose underlying simple graph is a cycle of at least three
vertices, where each parallel class contains at most two edges.
\end{enumerate}

We note that if two graphs in \mcal{G} have the same parallel pairs,
loops, and thin edges, then their Hamiltonian cycles have the
same edge-sets.
Furthermore the lift matroids corresponding to identical
linear classes are equal.
In other words, the cyclic order in which the parallel pairs and
thin edges appear is immaterial to the lift matroid.

\begin{proposition}
\label{safety}
Let $k$ be a non-negative integer.
A matroid belongs to $\mcal{S}_{k}$ if and
only if it satisfies at least one of the following statements.
\begin{enumerate}[label=\textup{(\Alph*)}]
\item $M = L(G,\mcal{B})$, where $G\in \mcal{G}$ has at least
three vertices, and \mcal{B} is a linear class of at most $k$ Hamiltonian cycles,
\item $M = L(G,\mcal{B})$, where $G\in \mcal{G}$ has exactly two vertices, 
and \mcal{B} is a linear class of at most $k$ edge-disjoint Hamiltonian cycles,
\item $M = L(G,\mcal{B})$, where $G\in \mcal{G}$ has a single
vertex, and \mcal{B} contains at most $\min\{k,1\}$ loops,
\item $M=M(G)$ for a graph $G\in \mcal{G}$,
\item $M=M^{*}(G)$ for a graph $G\in \mcal{G}$, or
\item every connected component of $M$ has size at most two.
\end{enumerate}
\end{proposition}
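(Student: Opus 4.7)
The plan is to prove both implications of the characterisation. For sufficiency, I would realise each of (A)--(F) as a minor of a suitable spike. In case (A), with $G\in\mcal{G}$ having underlying cyclic simple graph on $r\geq 3$ vertices, I extend $G$ to $\Delta_{r}$ by inserting a parallel copy beside each thin edge; the Hamiltonian cycles are preserved, so $L(G,\mcal{B})$ is obtained from $L(\Delta_{r},\mcal{B})$ by deletion of the new parallel copies. Cases (B) and (C) follow similarly, using contractions to merge cycle\dash vertices down to two or one. Case (D) reduces to (A) (and its smaller analogues) by taking $\mcal{B}$ to be the collection of all cycles, since $L(G,\text{all cycles})=M(G)$; case (E) then follows from \Cref{attack} since $\mcal{S}_{k}$ is closed under duality. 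Case (F) is handled by directly exhibiting each small\dash component matroid (loops, coloops, parallel pairs, series pairs, and direct sums of these) as a minor of a sufficiently large spike.

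For necessity, let $N = L(\Delta_{r},\mcal{B})/C\ba D$ with $|\mcal{B}|\leq k$. Using the minor formulas for lift matroids stated before this proposition, $N$ is either $L(G',\mcal{B}')$ for the graph minor $G'=\Delta_{r}/C'\ba D'$ (with $\mcal{B}'$ obtained from $\mcal{B}$ by the same operations) or, when an unbalanced loop is contracted, $N = M(G'\ba e)$, immediately giving form (D). I would trace the evolution of the graph step by step: contracting one edge of a parallel pair merges its endpoints and creates an unbalanced loop; deleting one edge of a pair leaves a thin edge; deleting both edges of a pair opens the cycle of $\Delta_{r}$ into a path. As long as $G'$ has at least three vertices, retains a cyclic underlying simple graph, and $\mcal{B}'$ consists of at most $k$ Hamiltonian cycles, we have $G'\in\mcal{G}$ and $N$ fits form (A); if the vertex count falls to two or one we slide into (B) or (C).

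The decisive case is when some full parallel pair has been deleted, destroying the cycle structure and forcing $\mcal{B}'$ to be empty. Here $G'$ is a (possibly disconnected) union of paths, and I would prove that $L(G',\emptyset)=M^{*}(H)$ for a cycle graph $H\in\mcal{G}$ assembled from the multiset of parallel pairs and thin edges of $G'$: the circuits of $L(G',\emptyset)$ are precisely the pairs of unbalanced cycles with at most one common vertex, which includes every pair of parallel pairs whether intra\dash component or cross\dash component, and this collection matches the minimal edge cuts of a single doubled cycle $H$. When contractions have been drastic enough that $H$ would shrink below three vertices, or that all remaining matroid components have size at most two, we land in (B), (C), or (F) instead. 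The main obstacle is establishing the identity $L(G',\emptyset)=M^{*}(H)$ in the broken\dash cycle setting, and absorbing thin edges of $G'$ as single\dash edge macro\dash edges or as loops at vertices of $H$ without violating the ``at most two per parallel class'' constraint of $\mcal{G}$; once this is handled, the remaining work is routine bookkeeping over the minor operations and the linear class.
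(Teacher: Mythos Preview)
Your necessity direction is essentially correct and captures the paper's key idea: the identification $L(G',\emptyset)\cong M^{*}(H)$, where $H\in\mcal{G}$ is obtained from $G'$ by swapping the roles of loops and thin edges, is precisely what the paper invokes when a thin edge is deleted and all balanced Hamiltonian cycles are destroyed. The paper organises this as a one-element-at-a-time minimal-counterexample argument rather than tracing the whole contraction/deletion sequence at once, but the content is the same.

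The sufficiency direction has two genuine gaps. First, your treatment of (A) only inserts parallel partners beside thin edges; nothing is said about loops of $G$. A graph $G\in\mcal{G}$ carrying loops is not obtained from $\Delta_{|V(G)|}$ by deletion alone, so your construction does not apply. The paper instead passes to the larger spike $\Delta_{t+p+s}$ (one parallel pair for each parallel pair, each loop, and each thin edge of $G$) and recovers $L(G,\mcal{B})$ by \emph{contracting} the new partners of the loops and deleting the new partners of the thin edges.

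Second, and more seriously, your reduction of (D) to (A) via $M(G)=L(G,\text{all cycles})$ fails. Category (A) requires $\mcal{B}$ to consist of at most $k$ \emph{Hamiltonian} cycles, whereas ``all cycles'' includes parallel pairs and loops, and even the Hamiltonian cycles alone can number $2^{t}$ when $G$ has $t$ parallel pairs, far exceeding $k$. So $L(G,\text{all cycles})$ is not an instance of (A), and realising $M(G)$ this way would require a spike with more than $k$ balanced cycles, landing outside $\mcal{S}_{k}$. The paper's device is different: add an unbalanced loop $e$ to $G$ and take $\mcal{B}=\emptyset$; since contracting an unbalanced loop in a lift matroid returns the cycle matroid of the remaining graph, one gets $M(G)=L(G^{e},\emptyset)/e$, and $L(G^{e},\emptyset)$ is a genuine instance of (A), (B), or (C) with $|\mcal{B}|=0\leq k$. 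Your case (E) inherits the same defect via its dependence on (D) and needs the analogous repair (the paper writes $M^{*}(G)\cong L(G^{e},\emptyset)\ba e$ for a suitably relabelled $G^{e}$).
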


\begin{definition}
We refer to matroids satisfying the statements in \Cref{safety} as being
\emph{Category\dash}(A), (B), (C), (D), (E), or (F), respectively.
\end{definition}

\begin{proof}[Proof of \textup{\Cref{safety}}.]
We start by proving that any matroid in $\mcal{S}_{k}$ satisfies one of the
statements in \Cref{safety}.
Assume this fails for $M\in \mcal{S}_{k}$.
Now $M$ can be expressed as $L(\Delta_{r},\mcal{B})/I\ba J$ for
disjoint sets $I$ and $J$, where \mcal{B} contains at most $k$
Hamiltonian cycles.
We assume that we have chosen $M$ so that $|I\cup J|$ is as small as
possible.
If $|I \cup J|=0$, then $M=L(\Delta_{r},\mcal{B})$ is a Category\dash (A)
matroid, which is impossible.
Therefore we let $e$ be an element in $I\cup J$, and we define
$M^{e}$ to be $L(\Delta_{r},\mcal{B})/(I-e)\ba (J-e)$.
Note that $M^{e}$ is in $\mcal{S}_{k}$, and $M$ is either
$M^{e}/e$ or $M^{e}\ba e$.
Our choice of $M$ means that $M^{e}$ is not a counterexample
to the \namecref{safety}.

If $M^{e}$ is Category\dash (F), then so is $M$,
which is impossible.
Assume that $M^{e}=M(G)$ is Category\dash (D).
Then $M$ is also Category\dash (D) unless
$M = M^{e}\ba e$ where $e$ is a thin edge in $G$.
But in this case any circuit of $M$ is either a loop,
or a parallel pair in $G\ba e$.
Hence $M$ is Category\dash (F).
The case when $M^{e}$ is Category\dash (E) leads to
a dual contradiction.
It is easy to see that any minor of a Category\dash (C) matroid
belongs to Category\dash (C) or (F).
Assume that $M^{e}=L(G,\mcal{B})$ is Category\dash (B).
If $e$ is a thin edge in $G$ and $M=M^{e}\ba e$,
then $M$ is a rank\dash one matroid with no loops, and is
therefore Category\dash (C).
In any other case $M^{e}\ba e$ is Category\dash (B),
so we assume that $M=M^{e}/e$.
If $e$ is a loop then it is unbalanced, and $M = M(G\ba e)$.
In this case $M$ is Category\dash (D).
So $e$ is a non-loop edge.
Since $e$ is in at most one balanced cycle,
$\mcal{B}/e$ contains at most one balanced loop.
Therefore $M = L(G/e,\mcal{B}/e)$ is Category\dash (C).

Now we must assume that $M^{e}=L(G,\mcal{B})$ is Category\dash (A).
Assume $M=M^{e}\ba e$.
If $e$ is not a thin edge, then $M$ is also Category\dash (A).
In the case that $e$ is a thin edge, there are no cycles in $\mcal{B}\ba e$.
The thin edges of $G\ba e$
are coloops in $M = L(G\ba e,\mcal{B}\ba e)=L(G\ba e,\emptyset)$.
The only circuits of $M$ consist of a pair of loops in $G$, a
loop and a parallel pair, or a pair of parallel pairs.
Now it is easy to see that $M$ is $M^{*}(H)$, where $H$ is in \mcal{G},
and has the same parallel pairs as $G\ba e$.
The loops of $H$ are the thin edges of $G\ba e$, and the thin edges of
$H$ are the loops of $G\ba e$.
Therefore $M$ is Category\dash (E).
Thus we assume that $M=M^{e}/e$.
If $e$ is a loop of $G$, then $M = M(G\ba e)$ is Category\dash (D),
so we assume $e$ is not a loop.
If $G$ has more than three vertices, then $M$ is certainly
Category\dash (A), so we assume $G$ has exactly three vertices.
To show that $M$ is Category\dash (B), we assume for a
contradiction that two cycles in $\mcal{B}/e$ share an edge.
This means that two cycles in \mcal{B} have two
common edges, one of which is $e$.
Given that $G$ has three vertices, these two cycles differ
in only one edge, which is a contradiction as \mcal{B} is a linear class.
We have now shown that matroids in $\mcal{S}_{k}$ satisfy at
least one of the statements in the \namecref{safety}.

Now we prove the converse.
Let $M$ be a Category\dash (F) matroid with $l$ loops,
 $p$ parallel pairs, and $c$ coloops.
Then $M$ is isomorphic to $M(G)\ba e$, where $G$ is a graph in \mcal{G}
with $l$ loops, $p$ parallel pairs, and $c+1$ thin edges, one of which is $e$.
This shows that every Category\dash (F) matroid is a minor of a
Category\dash (D) matroid.
Let $M = M(G)$ be a Category\dash (D) matroid.
Then $M = L(G^{e},\emptyset)/e$, where $G^{e}$ is obtained from $G$
by adding $e$ as a loop.
Since $G$ is in \mcal{G}, it follows that $G^{e}$ is also.
Thus every Category\dash (D) matroid is a minor of a
matroid in Category\dash (A), (B), or (C).
Similarly, let $M=M^{*}(G)$ be a Category\dash (E) matroid,
where $G$ has $l$ loops, $p$ parallel pairs, and $c$
thin edges.
Then $M$ is isomorphic to $L(G^{e},\emptyset)\ba e$, where
$G^{e}\in\mcal{G}$ has $p$ parallel pairs, $c$ loops, and
$l+1$ thin edges, one of which is $e$.
Because of these arguments, it now suffices to show that
Category\dash (A), (B), and (C) matroids are in $\mcal{S}_{k}$.

Let $M$ be an $n$\dash element Category\dash (C) matroid.
Note that $M$ has at most one matroid loop.
Construct the two-vertex graph $G^{e}$ with $n-1$ loops
and two non-loop edges, one of which is $e$.
If $M$ has a loop, then set $\mcal{B}^{e}$ to contain the
unique Hamiltonian cycle of $G^{e}$, and otherwise make
$\mcal{B}^{e}$ empty.
Then $M = L(G^{e},\mcal{B}^{e})/e$.
Next we let $M=L(G,\mcal{B})$ be a Category\dash (B) matroid.
If $G$ has at most one non-loop, then $M$ is the union
of a coloop and a parallel class.
In this case it is easy to see that $M$ is a minor of a
Category\dash (A) matroid.
Therefore we assume that $G$ has at least two non-loop edges.
Since $G$ has at most four such edges, \mcal{B}
contains at most two Hamiltonian cycles.
We construct $G^{e}$, a three-vertex graph in \mcal{G}.
We set the number of non-loop edges in $G^{e}$ to be one
more than the number of non-loops in $G$, and we
make $e$ a thin edge of $G^{e}$.
Let the number of loops in $G^{e}$ be equal to the number of
loops in $G$.
We can find a linear class $\mcal{B}^{e}$ of Hamiltonian cycles
in $G^{e}$ so that $|\mcal{B}^{e}|=|\mcal{B}|$.
Now $M$ is isomorphic to $L(G^{e},\mcal{B}^{e})/e$,
so we have reduced the proof to showing that
every Category\dash (A) matroid is in $\mcal{S}_{k}$.

Let $M=L(G,\mcal{B})$ be a Category\dash (A) matroid.
Let the parallel pairs of $G$ be
$\{a_{1},b_{1}\},\ldots, \{a_{t},b_{t}\}$, let the loops be
$\{c_{1},\ldots, c_{p}\}$, and let the thin edges be
$\{d_{1},\ldots, d_{s}\}$.
We construct $G^{+}$ isomorphic to $\Delta_{t+p+s}$ with parallel pairs
$\{a_{1},b_{1}\},\ldots, \{a_{t},b_{t}\}$,
$\{c_{1},x_{1}\},\ldots, \{c_{p},x_{p}\}$, and
$\{d_{1},y_{1}\},\ldots, \{d_{s},y_{s}\}$.
Let $C_{1},\ldots, C_{q}$ be the cycles in \mcal{B}.
For each $C_{i}$, let $C_{i}^{+}$ be the Hamiltonian
cycle of $G^{+}$ containing all of
$x_{1},\ldots, x_{p}$ and $d_{1},\ldots, d_{s}$,
and such that $C_{i}^{+}$ intersects $\{a_{j},b_{j}\}$ in the same
edge as $C_{i}$ for each $j$.
It is clear that $\mcal{B}^{+}$ is a linear class.
Furthermore $M$ is isomorphic to
\[
L(G^{+},\mcal{B}^{+})/\{x_{1},\ldots, x_{p}\}\ba\{y_{1},\ldots, y_{s}\},
\]
so $M$ is in $\mcal{S}_{k}$, and the proof of the
\namecref{safety} is complete.
\end{proof}

\begin{proposition}
\label{galaxy}
Let $M=L(G,\mcal{B})$ be a Category\dash(A) matroid, where $G$ has at
least five vertices.
If $C$ is a circuit-hyperplane in $M$, then $G[C]$ is a cycle in \mcal{B}.
\end{proposition}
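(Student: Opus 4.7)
The plan is to apply directly the circuit characterization of lift matroids stated earlier: every circuit $X$ of $L(G,\mcal{B})$ has $G[X]$ equal to (i) a balanced cycle, (ii) an unbalanced theta-subgraph, or (iii) a pair of unbalanced cycles sharing at most one vertex. I will show that when $G\in\mcal{G}$ has $v\geq 5$ vertices, a circuit-hyperplane can only arise from case (i); since $\mcal{B}$ consists of Hamiltonian cycles and balanced cycles are exactly the cycles in $\mcal{B}$ by definition, the conclusion $G[C]\in\mcal{B}$ follows immediately.

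The first step is to dispose of the degenerate situation where $G$ has neither loops nor parallel pairs: then $G$ is just the $v$-cycle, and a brief check — splitting on whether $\mcal{B}$ is empty (so $M$ is free) or contains the unique Hamiltonian cycle (so the ground set is the only circuit, with rank $v-1=r(M)$, hence not a hyperplane) — shows $M$ has no circuit-hyperplanes at all, and the proposition holds vacuously. From here on I assume $G$ has a loop or a parallel pair. Each is an unbalanced cycle (not Hamiltonian, since $v\geq 3$, and $\mcal{B}$ contains only Hamiltonian cycles), so $G$ is unbalanced and Zaslavsky's rank formula gives $r(M)=v$. Hence any circuit-hyperplane $C$ satisfies $|C|=r(M)=v$.

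The core step is to rule out (ii) and (iii) using $|C|=v$. For (ii), the underlying simple graph of $G$ is a $v$-cycle with no chords, so the only way to exhibit three internally-disjoint $u$-to-$v$ paths is to take adjacent vertices $u,v$ joined by a parallel pair and use each parallel edge as a length-$1$ path together with the length-$(v-1)$ path around the rest of the cycle; this theta-subgraph has $1+1+(v-1)=v+1$ edges, contradicting $|C|=v$. For (iii), the unbalanced cycles of $G$ fall into three families — loops ($1$ edge), parallel pairs ($2$ edges), and Hamiltonian cycles not in $\mcal{B}$ ($v$ edges) — and I would enumerate the allowed pairs sharing at most one vertex: two loops, loop plus parallel pair, two parallel pairs, and loop plus Hamiltonian, with respective edge counts $2$, $3$, $4$, and $v+1$. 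Pairs involving two Hamiltonians, or a Hamiltonian with a parallel pair, share at least two vertices and are forbidden. None of $2,3,4,v+1$ equals $v$ once $v\geq 5$, so (iii) is impossible.

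The main obstacle — and the only place where $v\geq 5$ is genuinely needed rather than $v\geq 3$ — is the two-disjoint-parallel-pairs sub-case of (iii), which produces a genuine circuit-hyperplane of size $4$ when $v=4$ and would falsify the proposition at that value. Everything else is a routine application of the rank formula together with the circuit characterization. Once (ii) and (iii) are excluded, the size constraint $|C|=v$ in case (i) forces $G[C]$ to be a Hamiltonian cycle that is balanced, i.e., a member of $\mcal{B}$.
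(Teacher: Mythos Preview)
Your proof is correct. The paper's own proof is a one-liner invoking \Cref{charge}, the cyclic-flat description of spikes: a circuit-hyperplane is a cyclic flat of rank $r(M)-1$, and from the list in \Cref{charge} the only such flats (once the rank is at least five) are the edge-sets of cycles in $\mcal{B}$, since a union of $p$ parallel pairs has rank $p+1$ and cardinality $2p$, which coincide with $r-1$ and $r$ only when $r=4$.

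You instead work directly from the circuit characterisation of lift matroids, computing $|C|=r(M)=v$ and eliminating the theta-subgraph and two-unbalanced-cycle cases by an edge count. This is a genuinely different route. Your argument has the advantage of being entirely self-contained for arbitrary $G\in\mcal{G}$, whereas \Cref{charge} is literally stated only for $\Delta_{r}$ and must be tacitly extended to graphs with loops and thin edges before the paper's proof applies. The paper's approach, on the other hand, is faster once the cyclic-flat list is in hand and makes the role of the bound $v\geq 5$ transparent in a single line. Both identify the two-parallel-pairs configuration at $v=4$ as the obstruction.
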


\begin{proof}
This follows very easily from \Cref{charge}.
We note that the constraint $|V(G)|\geq 5$ is necessary, for if $G$
has four vertices, then a pair of parallel pairs in $G$ may form a
circuit-hyperplane of $M$.
\end{proof}

We use the symbol $\graph$ to denote the graph obtained from
a three-vertex cycle by adding a single parallel edge.

\begin{proposition}
\label{camera}
The following matroids are not in \mcal{S}.
\begin{enumerate}[label=\textup{(\roman*)}]
\item $U_{0,1}\oplus U_{1,1}\oplus U_{1,3}$,
\item $U_{0,1}\oplus U_{1,1}\oplus U_{2,3}$,
\item $U_{0,1}\oplus U_{2,4}$,
\item $U_{1,1}\oplus U_{2,4}$,
\item $U_{1,2}\oplus M(\graph)$.
\end{enumerate}
\end{proposition}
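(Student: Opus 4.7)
My strategy is to verify, for each matroid, that it fails every one of the six categories of \Cref{safety}. Since $\mcal{S}$ is closed under duality (\Cref{attack}), and since matroid (i) is dual to matroid (ii), matroid (iii) is dual to matroid (iv), and matroid (v) is self-dual (as $U_{1,2}$ is self-dual and the involution $a\leftrightarrow c$, $b\leftrightarrow d$ witnesses the self-duality of $M(\graph)$ by matching circuits to cocircuits), it suffices to handle matroids (i), (iii), and (v).

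The easy categories dispose of much quickly. Category (F) fails for each of our three matroids because each contains a component of size at least three, namely $U_{1,3}$, $U_{2,4}$, and $M(\graph)$. Category (C) produces only matroids of rank at most one, whereas ours have rank two or three. Category (B) produces no matroid loop: a $1$\dash element circuit of $L(G,\mcal{B})$ is a balanced loop, and a graph loop cannot be Hamiltonian when $|V(G)|=2$; this kills matroids (i) and (iii), which each contain a $U_{0,1}$ summand, while matroid (v) is excluded by rank, since $|V(G)|=2$ forces rank at most two.

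Category (A) is the deepest. I would use that when $|V(G)|=r\geq 3$, the rank of $L(G,\mcal{B})$ equals $r$ if $G$ contains an unbalanced cycle and $r-1$ otherwise; in the latter case every loop and parallel pair would have to be balanced, but these are non-Hamiltonian cycles, so $G$ must be the bare cycle $C_r$ and $M=U_{r-1,r}$. Rank two for matroids (i) and (iii) thus forces $r=3$ balanced, giving $M=U_{2,3}$ on three elements rather than five. For matroid (v) of rank three, the only viable cases are $r=4$ balanced ($U_{3,4}$, only four elements) or $r=3$ unbalanced on six edges. In the latter case I would count $2$-element circuits: by the circuit description, such a circuit is either a balanced $2$-cycle (impossible, since Hamiltonian cycles have length three when $|V(G)|=3$) or a pair of unbalanced cycles sharing at most one vertex of total size two (two graph loops). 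Matroid (v) has exactly two $2$-element circuits, $\{a,b\}$ and $\{e,f\}$, which would force $a,b,e,f$ all to be graph loops; but then every one of the $\binom{4}{2}=6$ pairs among them would be a $2$-circuit, contradicting that (v) has only two.

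For Categories (D) and (E), matroids (iii) and (iv) contain $U_{2,4}$, which is famously neither graphic nor cographic, so they are immediately excluded. For matroid (i), a parallel class of size three forces a type (ii) realization with at least three parallel edges between the two vertices, which leaves no room for a bridge (all non-loop edges are mutually parallel), so $M(G)\neq(i)$; the dual argument excludes (E), since the $U_{2,3}$ summand of $(i)^*$ forces a triangle block in $G$, possible in $\mcal{G}$ only for type (iii) with $r=3$, which contains no bridges and hence cannot provide the needed coloop. For matroid (v), every graph in $\mcal{G}$ is connected with at most one non-loop block, so $M(G)$ has at most one matroid-component of size greater than two; yet matroid (v) has two such components ($U_{1,2}$ and $M(\graph)$), ruling out (D) and, by self-duality, (E). The principal obstacle is the Category (A) analysis for matroid (v), where $L(\Delta_3,\mcal{B})$ is the natural candidate and the $2$-circuit count furnishes the cleanest contradiction.
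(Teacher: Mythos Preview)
Your argument is correct in substance and follows the same overall scheme as the paper: reduce by duality to (i), (iii), (v) and then rule out each of the six categories from \Cref{safety}. There is one small slip: in your treatment of Category~(D) for matroid~(v) you write ``at most one matroid-component of size greater than two'' and then list $U_{1,2}$ as one of the two offending components, but $|U_{1,2}|=2$. The intended statement is ``size greater than one'' (equivalently, ``size at least two''), and with that correction your block-counting argument goes through: for $G\in\mcal{G}$ the non-loop edges form a single $2$\dash connected block, so $M(G)$ has at most one component of size $\geq 2$, whereas (v) has two such components.

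Where you and the paper diverge is in the style of the category-by-category elimination. The paper handles (i) by the single observation that a matroid in $\mcal{S}$ with a loop must lie in Categories (C)--(F), and that in (C), (D), (E) the simultaneous presence of a loop and a coloop forces every element to be a loop or coloop; this dispatches (i) (and, dually, (ii)) in one stroke. For (v), the paper invokes the fact that Category\dash (A), (B), (C) matroids have at most one non-trivial parallel class, whereas (v) has two. You instead run a finer case analysis: rank considerations to kill (B) and to pin down the relevant $|V(G)|$ in (A), followed by your two-element-circuit count for the $r=3$ unbalanced case. Your route is longer but perfectly sound; indeed, your rank argument for Category~(B) is arguably more robust than the paper's parallel-class remark, since a Category\dash (B) matroid with $|\mcal{B}|=2$ edge-disjoint Hamiltonian $2$\dash cycles can in fact have more than one non-trivial parallel class (though this does not affect the conclusion for (v), which has rank three and so is excluded from Category~(B) by rank alone).
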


\begin{proof}
By virtue of \Cref{attack}, we need only prove that the matroids in
(i), (iii), and (v) are not in \mcal{S}.
If a matroid belongs to \mcal{S}, then it belongs to
$\mcal{S}_{k}$ for some value of $k$.
Therefore we can apply \Cref{safety}.
Note that if a matroid in $\mcal{S}_{k}$ has a loop, then it is
Category\dash (C), (D), (E), or (F).
If a Category\dash (C), (D), or (E) matroid has a loop and a coloop,
then every element is a loop or a coloop.
(For example, the only way a Categry\dash (D) matroid can have a
coloop is if it is the cycle matroid of a graph with a thin edge
joining two vertices.)
Category\dash (F) matroids have no components of size three.
In any case, we see that
 $U_{0,1}\oplus U_{1,1}\oplus U_{1,3}$ is not in $\mcal{S}_{k}$.
Category\dash (C) matroids do not have rank two,
Category\dash (D) and (E) matroids obviously have no $U_{2,4}$\dash minor
as they are graphic or cographic, and nor do Category\dash (F) matroids.
Therefore $U_{0,1}\oplus U_{2,4}$ is not in $\mcal{S}_{k}$.

Finally, Category\dash (A), (B), or (C) matroids have at most
one non-trivial parallel class, so they cannot be isomorphic to
$U_{1,2}\oplus M(\graph)$.
If $U_{1,2}\oplus M(\graph)$ is Category\dash (D), then it is isomorphic to
$M(G)$ for some $G\in \mcal{G}$ with four vertices.
But any such matroid is connected up to loops, so
$U_{1,2}\oplus M(\graph)$ is not Category\dash (D).
Duality tells us it is not Category\dash (E) either, and it
certainly has a connected component with more than two
elements so it is not Category\dash (F).
\end{proof}

\begin{lemma}
\label{output}
Let $M$ be an excluded minor for \mcal{S} such that $r(M),r^{*}(M)>2$
and $M$ is not isomorphic to $U_{1,2}\oplus M(\graph)$.
Then $M$ is simple.
\end{lemma}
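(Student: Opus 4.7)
The plan is to show directly that $M$ can have neither a loop nor a parallel pair, which is exactly what ``simple'' means. I exploit that every proper minor of $M$ lies in $\mcal{S}$, so \Cref{safety} lets me classify such minors into Categories (A)--(F) and upgrade back to $M$.

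Step~1 (no loops). Suppose $e$ is a loop of $M$. Then $M\ba e = M/e$ is a proper minor, hence lies in $\mcal{S}$. By \Cref{safety}, $M\ba e$ satisfies one of (A)--(F). I claim that in every case, $M = (M\ba e)\oplus U_{0,1}$ also belongs to the same category, contradicting that $M$ is an excluded minor. For Categories (A), (B), (C), $M\ba e = L(G,\mcal{B})$ for some $G\in\mcal{G}$; attach $e$ as a new loop at any vertex to obtain $G^e\in\mcal{G}$ (the definition of $\mcal{G}$ puts no cap on loops, and loops do not affect the constraints on parallel classes or thin edges), and $L(G^e,\mcal{B}) = M$. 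Categories (D) and (E) are preserved by adding a loop edge to the underlying graph, and for (F) the loop itself is an acceptable size-one component.

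Step~2 (no parallel pairs). Suppose $\{e,f\}$ is a parallel pair. Again $M\ba e\in\mcal{S}$ falls into a category from \Cref{safety}; I examine $f$'s role. If $M\ba e$ is Category (D), (E), or (F), then extending $M\ba e$ by a parallel copy of $f$ is immediate: a graphic/cographic matroid extends by a parallel edge to another one in $\mcal{G}$ provided $f$ is a thin edge, while (F) directly absorbs $\{e,f\}$ as a size-two component. If $M\ba e = L(G,\mcal{B})$ is Category (A), (B), or (C) and $f$ is a thin edge of $G$, I add an edge $e$ parallel to $f$ in $G$, so the parallel class has size two and $G' \in \mcal{G}$. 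I then enlarge $\mcal{B}$ to $\mcal{B}' = \mcal{B}\cup\{\{e,f\}\}\cup\{C-f+e\colon C\in\mcal{B},\ f\in C\}$; one verifies via case-checking on theta-subgraphs containing $\{e,f\}$ that $\mcal{B}'$ is a linear class. Then $L(G',\mcal{B}')=M$ is back in the same category, giving $M\in \mcal{S}$, a contradiction.

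The remaining sub-case is where $f$ already lies in a parallel pair $\{f,g\}$ of the underlying graph, so that $\{e,f,g\}$ is a parallel class of size three in $M$ and cannot be accommodated inside $\mcal{G}$. I expect this to be the main obstacle. Here I plan to use $r(M),r^*(M)>2$ together with \Cref{camera}: for instance, if $M\ba e$ is Category (A) with at least five vertices, the presence of the size-three parallel class forces the biased graph to be quite small, and combined with cocircuits avoiding $\{e,f,g\}$ one locates either a $U_{0,1}\oplus U_{1,1}\oplus U_{1,3}$-minor, a $U_{1,1}\oplus U_{2,4}$-minor, or a $U_{1,2}\oplus M(\graph)$-minor. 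The first two are forbidden by \Cref{camera} and thus would give a proper minor of $M$ outside $\mcal{S}$, contradicting the excluded-minor hypothesis. The third can only appear as $M$ itself (again since proper minors stay in $\mcal{S}$), and this is precisely the exception carved out in the hypothesis. The dual argument (using Series pairs via \Cref{attack}) would likely simplify some of the Category-(E) sub-cases. The delicate accounting is to verify that the exceptional $U_{1,2}\oplus M(\graph)$ is the unique escape, which I would do by exhausting small-graph cases of $G$ together with the allowed linear classes.
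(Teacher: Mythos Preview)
Your Step~1 contains a decisive error. When $M\ba e = L(G,\mcal{B})$ is Category-(A), (B), or (C) and you adjoin $e$ as a graph loop to form $G^{e}$, the element $e$ is an \emph{unbalanced} loop of $G^{e}$ (since $\mcal{B}$ contains only Hamiltonian cycles, never single-edge loops). An unbalanced loop is not a circuit of the lift matroid, so $e$ is \emph{not} a matroid loop of $L(G^{e},\mcal{B})$; instead $e$ joins the parallel class formed by the other graph loops. Hence $L(G^{e},\mcal{B})\ne (M\ba e)\oplus U_{0,1}=M$. The same confusion bites in Category~(E): a graph loop of $G$ is a loop of $M(G)$ and therefore a \emph{coloop} of $M^{*}(G)$, so $M^{*}(G^{e})=M^{*}(G)\oplus U_{1,1}$, not $M^{*}(G)\oplus U_{0,1}$. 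Your extension-by-a-loop argument is valid only for Categories~(D) and~(F), so Step~1 does not go through.

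Step~2 inherits the problem. If $f$ is a thin edge and you enlarge $\mcal{B}$ by the two-edge cycle $\{e,f\}$, then $\mcal{B}'$ no longer consists solely of Hamiltonian cycles, so $L(G',\mcal{B}')$ is not Category-(A); you have given no argument that it lies in \mcal{S} via some other category. You also omit the case where $f$ is a graph loop of $G$, which is precisely how non-trivial matroid parallel classes arise in Category-(A) matroids. And the ``remaining sub-case'' is only a plan, not a proof.

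The paper proceeds quite differently. For looplessness it does not try to extend $M\ba e$ back to $M$; instead it analyses the component structure of $M\ba e$ and repeatedly invokes the forbidden minors of \Cref{camera} to force the large component of $M\ba e$ to be the cycle matroid of a graph in \mcal{G}, whence $M$ itself is Category-(D). For parallel pairs the paper \emph{contracts} one element $x$ of the pair (so that $M/x$ acquires a loop), shows $M/x$ must be Category-(D), and then applies \Cref{ritual} to realise $M$ itself as a lift matroid $L(G^{x},\mcal{B})$; the contradiction comes from analysing which cycles \mcal{B} must contain, again via \Cref{camera}.
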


\begin{proof}
We start with the following claim.

\begin{claim}
\label{patrol}
Let $M$ be an excluded minor for \mcal{S} such that $r(M),r^{*}(M)>2$.
Then $M$ is loopless.
\end{claim}

\begin{proof}
Assume the contrary, and let $e$ be a loop of $M$.
If every connected component of $M\ba e$ has size at most two, then the
same statement applies to $M$, so \Cref{safety} now implies that $M$ is in
\mcal{S}, a contradiction.
Therefore we let $N$ be a component of $M\ba e$ with at least three elements.
It is an easy exercise to see that $N$ has a minor isomorphic to either
$U_{1,3}$ or $U_{2,3}$ (see \cite[Exercise 9 Chapter 4]{Oxl11}).
If there is another component of $M\ba e$ with rank at least one, then
$M$ has a minor isomorphic to
$U_{0,1}\oplus U_{1,1}\oplus U_{1,3}$ or $U_{0,1}\oplus U_{1,1}\oplus U_{2,3}$.
In this case, \Cref{camera} implies that $M$ must be isomorphic to one of these
two matroids, so
$M$ has rank or corank equal to two, a contradiction to the hypotheses.
Thus every component of $M\ba e$ other than $N$ is a loop.
Since $r(M)\geq 3$, we deduce that $r(N)\geq 3$.

\Cref{camera} implies that if $N$ has an $U_{2,4}$\dash minor, then
$M$ is isomorphic to $U_{0,1}\oplus U_{2,4}$, which is not possible.
Therefore $N$ is binary.
Furthermore, $N$ cannot have a minor isomorphic to $M(K_{4})$, or else
$M$ has a minor isomorphic to $U_{0,1}\oplus U_{1,1}\oplus U_{2,3}$.
Therefore $N$ is graphic (see \cite[Theorem 10.4.8]{Oxl11}).
We let $G$ be a graph such that $N= M(G)$.
As $r(N)\geq 3$, it follows that $G$ has at least three vertices, and
since $N$ is a connected component, it follows that $G$ is $2$-connected.
This implies that $G$ has a cycle, $C$, containing at least three vertices.
Assume that there is an edge, $x$, of $G$ such that
$x$ is neither in $C$, nor parallel to an edge of $C$.
Then there exists a cycle of $G$ with at least three vertices, and an edge
that is not in the span of that cycle.
Thus $N$ has a minor isomorphic to $U_{1,1}\oplus U_{2,3}$, and hence
$M$ has a minor isomorphic to $U_{0,1}\oplus U_{1,1}\oplus U_{2,3}$.
This leads to a contradiction, so every edge of $G$ is either in $C$, or
parallel to an edge in $C$.
If $G$ contains a parallel class of size at least three, then
$N$ has a minor isomorphic to $U_{1,1}\oplus U_{1,3}$, which
again leads to a contradiction.
Therefore $G$ is obtained from a cycle of at least three vertices
by adding loops and parallel edges in such a way that any parallel class
has size one or two.
Now it follows that $M$ is the cycle matroid of a graph in \mcal{G}, and
hence \Cref{safety} implies that $M$ is in \mcal{S}, a contradiction.
\end{proof}

Let $M$ be an excluded minor \mcal{S} satisfying $r(M),r^{*}(M)>2$
and assume that $M$ is not isomorphic to $U_{1,2}\oplus M(\graph)$.
Then $M$ has no loops by \Cref{patrol}.
Since \Cref{patrol} also applies to $M^{*}$, we
deduce that $M$ has no coloops.
Assume that $M$ has at least one parallel pair,
and let $\{x,y\}$ be such a pair.

\begin{claim}
\label{ticker}
$M/x$ has a connected component containing at least three elements.
\end{claim}

\begin{proof}
Assume for a contradiction that every connected component of $M/x$
has size one or two.
Then every connected component of $M/x$ is a loop, or a $2$\dash element circuit,
since $M$ has no coloops.
Let $L$ be the set of loops of $M/x$.
Then $L\cup x$ is a parallel class of $M$, since $M$ is loopless.
Let $C_{1},\ldots, C_{s}$ be the $2$\dash element components of
$M/x$ that are not circuits in $M$, and let $D_{1},\ldots, D_{t}$ be the
$2$\dash element components that are circuits of $M$.
Note that $s+t\geq 2$, since $r(M)>2$ implies $r(M/x)\geq 2$.

If $t=0$, then $M$ is isomorphic to $M^{*}(G)$, where $G$ is
obtained from a cycle of length $s + |L| + 1$ by replacing $s$
of the edges with parallel pairs.
Thus $G$ is in \mcal{G}, and \Cref{safety} implies that $M$ is in
\mcal{S}, a contradiction.
Therefore $t>0$.
Assume that $s=0$, so that $t\geq 2$.
The connected components of $M$ are now
$D_{1},\ldots, D_{t}$ and $L\cup x$.
Thus $|L\cup x| >2$, or else every component of $M$ has
size at most two, which is a contradiction as $M$ is not in \mcal{S}.
We contract an element from $D_{1}$, so that the other element of
$D_{1}$ is now a loop.
We choose a single element from $D_{2}$, and three elements from
$L\cup x$.
Now we see that $M$ has a proper minor isomorphic to
$U_{0,1}\oplus U_{1,1}\oplus U_{1,3}$, which contradicts \Cref{camera}.
Therefore $s$ and $t$ are both positive.
We let $l$ be an element from $L$.
The restriction of $M$ to $C_{1}\cup D_{1}\cup \{x,l\}$ is isomorphic to
$U_{1,2}\oplus M(\graph)$.
So $M$ is isomorphic to this matroid, a contradiction.
This contradiction completes the proof of \Cref{ticker}.
\end{proof}

As $M$ is an excluded minor, we know that $M/x$ is a member of \mcal{S}.
Therefore it satisfies one of the statements in \Cref{safety}.
\Cref{ticker} shows that $M/x$ is not Category\dash(F).
Category\dash(A) and (B) matroids do not have loops, and $M/x$ contains the
loop $y$, so it belongs to neither of these categories.
As the rank of $M/x$ is at least two, it is not Category\dash(C).
If $M/x=M^{*}(G)$ for some $G\in \mcal{G}$, then $G$ has an
isthmus, since $M/x$ has a loop.
This is only possible if $G$ is obtained from $K_{2}$ by adding loops.
In this case $M/x$ has at least two coloops (since $r(M/x)\geq 2$).
But this is impossible, as $M$ has no coloops.
Therefore $M/x$ is not Category\dash(E), so it must be Category\dash(D).
Let $G\in \mcal{G}$ be chosen so that $M/x = M(G)$, and let $L$ be
the set of loops of $G$.
Note that $y$ is in $L$, and that $G$ has at least three vertices, since $r(M)> 2$.

We let $G^{x}$ be the graph obtained from $G$ by adding $x$ as a loop.
Let \mcal{B} be the collection of cycles of $G$ that correspond to circuits of $M$.
Since $M$ is loopless, \mcal{B} contains no loop.
\Cref{ritual} tells us that $M = L(G^{x},\mcal{B})$.
If \mcal{B} contains only Hamiltonian cycles, then $M$ is in \mcal{S}, a
contradiction.
Therefore \mcal{B} must contain a cycle with two edges, $a$ and $b$.
Hence $\{a,b\}$ is a circuit of $M$.
Assume that there is a two-edge cycle that is not in \mcal{B}
and let those edges be $c$ and $d$.
Then the restriction of $M$ to $\{a,b,c,d,x,y\}$ is isomorphic to
$U_{1,2}\oplus M(\graph)$.
This implies that $M$ is isomorphic to $U_{1,2}\oplus M(\graph)$,
which is a contradiction.
We conclude that \mcal{B} contains every two-edge cycle of $G^{x}$.

Assume that \mcal{B} contains no Hamiltonian cycles.
The only circuits of $M$ are pairs in $L\cup x$, parallel pairs in $G$,
and the union of an element in $L\cup x$ with the edge-set of
a Hamiltonian cycle.
In other words, $M$ is obtained from a circuit by adding parallel elements.
If $|L\cap x|\leq 2$, then $M$ is the cycle matroid of a graph in
\mcal{G}, a contradiction.
Therefore $|L\cup x|\geq 3$.
If $G$ has no parallel pairs, then $M$ is the lift matroid of
a graph obtained from a cycle by adding loops
(where no cycle is balanced).
In this case $M$ is a Category\dash (A) matroid, which is
impossible.
Therefore $G$ contains a parallel pair of edges, $a$ and $b$.
We contract $a$, so that $b$ is a loop, and then
select $b$, three elements from $L\cup x$, and a single element
not in $L\cup \{x,a,b\}$ (this element exists because $r(M)\geq 3$).
This shows that $M$ has a proper minor isomorphic to
$U_{0,1}\oplus U_{1,1}\oplus U_{1,3}$, a contradiction.

Now we know that \mcal{B} contains a Hamiltonian cycle, $C_{1}$.
Assume there is a Hamiltonian cycle, $C_{2}$ that is not in \mcal{B},
and assume that we have chosen $C_{2}$ so that it has as many edges
in common with $C_{1}$ as possible.
Let $\{e,f\}$ be a parallel pair of $G^{x}$ such that $e$ is in $C_{1}$ and $f$
is in $C_{2}$.
Let $C_{3}$ be the Hamiltonian cycle obtained from $C_{2}$ by removing $f$
and replacing it with $e$.
Then $C_{3}$ is in \mcal{B} by our choice of $C_{2}$.
The theta-subgraph obtained from $C_{3}$ by adding $f$ contains
$C_{2}$, $C_{3}$, and $\{e,f\}$, and exactly two of these cycles are in
\mcal{B}, which contradicts the fact that \mcal{B} is a linear class.
We conclude that every Hamiltonian cycle is in \mcal{B}.
This means that the only cycles of $G^{x}$ not in \mcal{B} are the loops.
It follows that $M$ is the direct sum of the cycle matroid $M(G\ba L)$
and the parallel class $L\cup x$.
But $G\ba L$ has a minor isomorphic to $\graph$, and hence
$M$ has a minor isomorphic to $U_{1,2}\oplus M(\graph)$, and this leads to a
contradiction that completes the proof of \Cref{output}.
\end{proof}

\begin{lemma}
\label{cheese}
Let $k$ be a non-negative integer.
There exists an integer, $N_{k}$, with the following property:
if $M$ is an excluded minor for $\mcal{S}_{k}$ with $|E(M)|>N_{k}$,
then $|E(M)|$ is even.
\end{lemma}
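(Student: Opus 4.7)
The plan is to show that any excluded minor $M$ for $\mcal{S}_k$ with $|E(M)|$ sufficiently large must actually be a tipless spike $L(\Delta_r,\mcal{B})$ with $|\mcal{B}|=k+1$; since such a matroid has $|E(M)|=2r$, evenness follows immediately. I choose $N_k$ large enough that (i) every excluded minor $M$ with $|E(M)|>N_k$ satisfies $r(M),r^*(M)>2$, (ii) $M$ is not isomorphic to $U_{1,2}\oplus M(\graph)$, and (iii) any Category\dash(A) representation $L(G,\mcal{B})$ appearing in the analysis has $|V(G)|\geq 5$, so that \Cref{galaxy} applies. I then split into two cases according to whether $M\in\mcal{S}$.

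In the first case, $M\notin\mcal{S}$: since every proper minor of $M$ lies in $\mcal{S}_k\subseteq\mcal{S}$, $M$ is also an excluded minor for $\mcal{S}$. By \Cref{output}, $M$ is simple. An inspection of \Cref{safety} shows that simple matroids in $\mcal{S}$ with rank and corank at least three are highly restricted: Categories (A)--(C) force parallel pairs or have bounded ground set, Category (F) matroids with rank and corank at least three have parallel pairs (in components of size two), and Categories (D), (E) in the simple case yield uniform matroids $U_{r,r+1}$ or $U_{1,n}$, each of which has rank or corank at most one. Combining this with \Cref{camera} to handle small pathological examples, the simple excluded minors for $\mcal{S}$ with rank and corank at least three form a finite list, and I take $N_k$ to exceed their ground-set sizes, ruling this case out.

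In the second case, $M\in\mcal{S}$, so \Cref{safety} classifies $M$. The choice of $N_k$ excludes Categories (B), (C), and (F), since these have ground sets of bounded size or bounded components. Categories (D) and (E) are themselves minor-closed (the underlying graphs in \mcal{G} behave well under deletion and contraction once loops and thin edges are tracked), so a matroid in (D) or (E) outside $\mcal{S}_k$ would already have a proper minor outside $\mcal{S}_k$, contradicting that $M$ is an excluded minor; this rules these categories out. Thus $M=L(G,\mcal{B})$ is Category\dash(A) with $G\in\mcal{G}$ on at least five vertices, and $|\mcal{B}|=k+1$ (since $M\notin\mcal{S}_k$). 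It remains to show that $G=\Delta_r$. Suppose for contradiction that $G$ has a loop, a thin edge, or an undoubled edge of its underlying cycle. Each such feature yields an element $e$ such that $M\backslash e$ or $M/e$ remains a Category\dash(A) matroid on a graph in \mcal{G} with $|\mcal{B}|=k+1$ Hamiltonian cycles; by \Cref{galaxy} this minor still has $k+1$ circuit-hyperplanes, and the same case analysis as above precludes it from lying in $\mcal{S}_k$, a contradiction. Hence $G=\Delta_r$ and $|E(M)|=2r$ is even.

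The hardest part is the final structural step in the second case, namely verifying that a Category\dash(A) matroid on an underlying graph with at least five vertices and $k+1$ circuit-hyperplanes cannot sneak into $\mcal{S}_k$ via some other representation in \Cref{safety}. This hinges on \Cref{galaxy} pinning down the circuit-hyperplanes of such a matroid as exactly the cycles in $\mcal{B}$, together with careful bookkeeping of how deletions and contractions in a $\Delta_r$-like graph either remove or preserve Hamiltonian cycles from the linear class.
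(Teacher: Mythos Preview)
Your split into the cases $M\in\mcal{S}$ and $M\notin\mcal{S}$ mirrors the paper, and your Case~2 is essentially the paper's Claim~\ref{orange}: you correctly observe that a large odd-sized $M\in\mcal{S}$ must be Category\dash(A), and that a loop or thin edge in the graph would allow you to delete or contract an element while keeping all $k+1$ circuit-hyperplanes, contradicting $M\ba e,M/e\in\mcal{S}_k$. (Your reasoning for excluding Categories~(D) and~(E) is garbled---these categories are contained in $\mcal{S}_0\subseteq\mcal{S}_k$ outright, so $M$ cannot lie there---but the conclusion is right.)

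The genuine gap is in Case~1, where $M\notin\mcal{S}$. You assert that ``the simple excluded minors for $\mcal{S}$ with rank and corank at least three form a finite list'', and your justification is an ``inspection of \Cref{safety}'' together with the claim that ``Categories~(A)--(C) force parallel pairs or have bounded ground set''. This does not work. First, \Cref{safety} describes matroids \emph{inside} $\mcal{S}$, and in this case $M\notin\mcal{S}$, so the proposition says nothing about $M$ directly. Second, the specific claim is false: Category\dash(A) matroids such as $L(\Delta_r,\mcal{B})$ are simple, cosimple, and of arbitrarily large size, so there is no shortage of large simple matroids in $\mcal{S}$ near which an excluded minor could sit. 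Nothing you have written rules out the existence of arbitrarily large simple, cosimple excluded minors for $\mcal{S}$ with odd ground sets.

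This is precisely the hard part of the proof. The paper handles it by analysing $M\ba e$ and $M/e$ for carefully chosen elements $e$: it shows (Claim~\ref{stream}) that when $M\ba e$ is Category\dash(A) the underlying graph has no loops and at least six vertices, and then (Claim~\ref{weight}) that every parallel pair $\{a,b\}$ of that graph forms a triangle $\{e,a,b\}$ in $M$. Playing two such single-element deletions off against each other forces $M\ba e$ to contain a triangle it cannot have, yielding the contradiction. This structural back-and-forth between different Category\dash(A) descriptions of one-element minors of $M$ is the missing idea in your Case~1; without it, the argument is incomplete.
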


\begin{proof}
We start by noting that matroids of rank at most two
are well-quasi-ordered.
This is not difficult to prove directly, but it also follows
from \cite{HO10}, since $U_{3,3}$ is the sole excluded minor
for the class of matroids with rank at most two, and
the main theorem in \cite{HO10} implies that the class
produced by excluding $U_{3,3}$ does not
contain any infinite antichains.
So there are only finitely many excluded minors for
$\mcal{S}_{k}$ with rank (or corank, by duality) at most two.

We let $\mcal{S}'$ stand for the class of
Category\dash(D), (E), or (F) matroids.
By referring to the proof of \Cref{safety},
we can easily verify that $\mcal{S}'$ is a minor-closed class.
Note that all matroids in $\mcal{S}'$ are graphic
(since graphs in \mcal{G} are planar).
Therefore any excluded minor for $\mcal{S}'$ is either
an excluded minor for the class of graphic matroids,
or it is itself graphic.
There are only five excluded minors for the class
of graphic matroids \cite{Tut58}.
The class of graphic matroids is well-quasi-ordered
by the famous result of Robertson and Seymour \cite{RS04},
so there are only finitely many graphic excluded minors for $\mcal{S}'$.
Thus $\mcal{S}'$ has finitely many excluded minors.
(With some extra effort, we could find the excluded minors for
$\mcal{S}'$ directly, in which case we would not have to rely on
Robertson and Seymour's result.)

These arguments show that we can choose 
$N_{k}$ so that it satisfies $N_{k}\geq 12$, and also
$N_{k}\geq |E(M)|$ whenever $M$ is an excluded minor for $\mcal{S}'$
or an excluded minor for $\mcal{S}_{k}$ with rank or corank at most two.
Now, if $M$ is an excluded minor for $\mcal{S}_{k}$ such that
$|E(M)|>N_{k}$, then $r(M),r^{*}(M)\geq 3$, and $M$ is not an
excluded minor for $\mcal{S}'$.

\begin{claim}
\label{orange}
Let $M$ be an excluded minor for $\mcal{S}_{k}$.
Assume $|E(M)|$ is odd and larger than $N_{k}$.
Then $M$ is not in \mcal{S}.
\end{claim}

\begin{proof}
Assume otherwise, so that $M$ belongs to $\mcal{S}_{k'}$ for some $k'>k$.
We apply \Cref{safety} to $M$.
Since $M$ is not in $\mcal{S}'$,
it is not Category\dash(D), (E), or (F).
It is also not Category\dash(B) or (C), as $r(M)\geq 3$.
Therefore $M$ is Category\dash(A), so $M=L(G,\mcal{B})$, where
$G\in\mcal{G}$ has at least three vertices, and \mcal{B} is a linear class
of at most $k'$ Hamiltonian cycles.
As $M$ is not in $\mcal{S}_{k}$, it follows that $k<|\mcal{B}|\leq k'$.
Furthermore,  $|E(M)|$ is odd, so the edge-set of $G$ is not a union of
parallel pairs.
Hence $G$ has either a loop or a thin edge.
Note that $M$ has no loops or coloops.

Assume that $G$ has at most four vertices.
Since $|E(M)|>N_{k}\geq 12$, it follows that $G$ has 
at least five loops.
Thus we can let $P$ be a parallel class of $M$ 
satisfying $|P|\geq 5$.
Let $x$ and $y$ be distinct elements in $P$.
We apply \Cref{safety} to $M\ba x$, which is in $\mcal{S}_{k}$.
Since $M\ba x$ has a parallel class of size at least four, it is not Category\dash(D) or (F).
Furthermore, $r(M)\geq 3$ implies $r(M\ba x)\geq 3$, so
it is not Category\dash(B) or (C).
Hence $M\ba x$ is Category\dash(A) or (E).
If $M\ba x$ is Category\dash(E), then $M\ba x = M^{*}(G_{x})$ for some graph
$G_{x}\in\mcal{G}$, and the elements in $P-x$ are thin edges of
$G_{x}$.
Let $G_{x}^{+}$ be obtained from $G_{x}$ by subdividing $y$, and
naming the two new edges $x$ and $y$.
Then $M^{*}(G_{x}^{+})$ is obtained from $M^{*}(G_{x})$ by placing
$x$ parallel to $y$.
It follows that $M = M^{*}(G_{x}^{+})$, and hence $M$ is in $\mcal{S}'$,
a contradiction.

Therefore $M\ba x$ is Category\dash(A), so it is equal to $L(G_{x},\mcal{B}_{x})$, where
$G_{x}\in \mcal{G}$ contains at least three vertices, and $\mcal{B}_{x}$ contains at
most $k$ Hamiltonian cycles of $G_{x}$.
The only parallel pairs in $L(G_{x},\mcal{B}_{x})$ arise from loops of $G_{x}$.
We deduce that the elements of $P-x$ are loops in $G_{x}$.
We obtain $G_{x}^{+}$ from $G_{x}$ by adding $x$ as a loop incident with an arbitrary
vertex.
Then $L(G_{x}^{+},\mcal{B}_{x})$ is obtained by adding $x$ parallel to $y$, so
$L(G_{x}^{+},\mcal{B}_{x})=M$.
This implies $M$ is in $\mcal{S}_{k}$, a contradiction.
Therefore $G$ has at least five vertices and
hence $r(M)\geq 5$.

Recall that $G$ has either a loop or a thin edge.
Assume that $G$ has a loop, $x$.
Note that $M\ba x = L(G\ba x,\mcal{B})$.
Since $M\ba x$ is in $\mcal{S}_{k}$, we apply \Cref{safety}.
Because $r(M\ba x)\geq 5$, it follows that $M\ba x$ is not Category\dash(B) or (C).
Note that \mcal{B} is not empty, since it contains more than $k$ Hamiltonian cycles.
Any cycle in \mcal{B} corresponds to a circuit-hyperplane in $M\ba x$,
which necessarily has at least five elements.
Therefore $M\ba x$ is not Category\dash(F).
The only cycle matroids of graphs in \mcal{G}
that have rank at least five and a circuit-hyperplane are
isomorphic to $U_{n-1,n}$.
But $M\ba x$ is not isomorphic to $U_{n-1,n}$, because
$r^{*}(M\ba x)\geq 2$.
We can conclude that $M\ba x$ is not Category\dash(D).
A simple analysis shows that a Category\dash(E) matroid
with rank at least five has no circuit-hyperplane, so
$M\ba x$ is not Category\dash(E).
The only remaining possibility is that $M\ba x$ is Category\dash(A).
Therefore $M\ba x = L(G_{x},\mcal{B}_{x})$, where
$G_{x}\in \mcal{G}$ has at least three vertices, and $\mcal{B}_{x}$ contains at
most $k$ Hamiltonian cycles.
Note that $G_{x}$ has at least five vertices, as $r(M\ba x)\geq 5$.
\Cref{galaxy} implies that $M\ba x$ has at most $k$ circuit-hyperplanes,
which is impossible because \mcal{B} contains at least $k+1$ cycles,
and thus $M$ has at least $k+1$ circuit-hyperplanes that avoid $x$.
Thus $G$ has no loop.
By an earlier conclusion, we can let $x$ be a thin edge.

As $G$ has no loops and at least $13$ edges, we can now see that
$G$ has at least six vertices, so $r(M)\geq 6$.
We consider the matroid $M/x$, which is in $\mcal{S}_{k}$.
As in the previous paragraph, we can argue that
$M/x = L(G_{x},\mcal{B}_{x})$, where $G_{x}\in\mcal{G}$
has at least five vertices, and $\mcal{B}_{x}$ contains at most $k$
cycles.
This implies that $M/x$ has at most $k$ circuit-hyperplanes.
But this is impossible, as \mcal{B} contains at least $k+1$ cycles,
and each corresponds to a circuit-hyperplane of $M$ that contains $x$.
\end{proof}

Now, whenever $M$ is an excluded minor for $\mcal{S}_{k}$
such that $|E(M)|$ is odd and larger than $N_{k}$,
\Cref{orange} tells us that it is an excluded minor for \mcal{S}.
Since $r(M),r^{*}(M)\geq 3$ and $|E(M)|>12$, 
\Cref{output} implies that $M$ is simple.
As $M^{*}$ is also an excluded minor for \mcal{S}, we can deduce that
$M$ is cosimple.

\begin{claim}
\label{stream}
Let $M$ be an excluded minor for $\mcal{S}_{k}$.
Assume $|E(M)|$ is odd and larger than $N_{k}$.
If, for some $e\in E(M)$, we have $M\ba e = L(G_{e},\mcal{B}_{e})$,
where $G_{e}\in\mcal{G}$ has at least three vertices and $\mcal{B}_{e}$ is a
linear class of at most $k$ Hamiltonian cycles, then $G_{e}$ has no loops, and at least
six vertices.
\end{claim}

\begin{proof}
Assume $x$ is a loop in $G_{e}$.
Note that $M\ba e / x = M(G_{e}\ba x)$, so that
$M\ba e/ x$ is Category\dash(D).
As $M$ is simple, $M\ba e/x$ has no matroid loops.
Therefore $x$ is the unique loop in $G_{e}$.
We will apply \Cref{safety} to $M/x$, which is in $\mcal{S}_{k}$.
Our aim is to deduce that $M/x$ too is Category\dash(D).

Since $G_{e}$ has exactly one loop, and at least twelve edges,
it follows that it has more than five vertices.
Therefore $r(M)> 5$.
This immediately rules out the cases where $M/x$ is
Category\dash(B) or (C).
Furthermore, if we let $C$ be the edge-set of any Hamiltonian cycle in $G_{e}$,
then either $C$ is a circuit-hyperplane of $M\ba e$, or $C\cup x$ is a circuit.
In any case, $M/x$ has a circuit of more than five elements, so
it is not Category\dash(F).

We note that Category\dash(A) and (E) matroids have
at most one parallel class.
So if $M/x$ belongs to either of these categories, then
$M/x\ba e = M(G_{e}\ba x)$ too has at most one parallel class.
This implies that $G_{e}$ has at most one parallel pair.
But in this case, $G_{e}$ comprises a cycle, a loop, and
at most one parallel edge.
This implies that $r^{*}(M\ba e) \leq 1$, so $r^{*}(M)\leq 2$,
a contradiction.
Therefore we can conclude that $M/x$ is Category\dash(D), exactly as
we wanted.

Now let $G_{x}\in\mcal{G}$ be chosen so that $M/x = M(G_{x})$.
Let $G_{x}^{+}$ be constructed from $G_{x}$ by adding the
loop $x$ to an arbitrary vertex.
\Cref{ritual} asserts that $M = L(G_{x}^{+},\mcal{B})$, for some
linear class \mcal{B} of cycles in $G_{x}^{+}$.
But \mcal{B} cannot contain a cycle with one or
two edges, for $M$ is simple.
Therefore \mcal{B} contains only Hamiltonian cycles of $G_{x}^{+}$.
This demonstrates that $M$ is in \mcal{S}, which is
impossible, according to \Cref{orange}.
Therefore $G_{e}$ has no loops, and as it has at least twelve
edges, it has at least six vertices.
\end{proof}

\begin{claim}
\label{weight}
Let $M$ be an excluded minor for $\mcal{S}_{k}$.
Assume $|E(M)|$ is odd and larger than $N_{k}$.
Assume also that $M\ba e = L(G_{e},\mcal{B}_{e})$ for some $e\in E(M)$,
where $G_{e}\in\mcal{G}$ has at least three vertices and $\mcal{B}_{e}$ is a
linear class of at most $k$ Hamiltonian cycles.
Then $G_{e}$ contains at least three parallel pairs, and
if $\{a,b\}$ is a parallel pair in $G_{e}$, then $\{e, a,b\}$ is a
circuit of $M$.
\end{claim}

\begin{proof}
Let the parallel pairs in $G_{e}$ be
$\{a_{1},b_{1}\},\ldots, \{a_{t},b_{t}\}$.
Assume that $t\leq 2$.
Since $G_{e}$ has no loops by \Cref{stream}, it follows that
$r^{*}(M\ba e)\leq 1$, which is impossible.
Hence $t\geq 3$.
Since the numbering of the parallel pairs is arbitrary, we can finish the proof by
showing that $\{e,a_{1},b_{1}\}$ is a circuit of $M$.

Note that $M\ba e / a_{1} = L(G_{e}/a_{1},\mcal{B}_{e}/a_{1})$,
where $\mcal{B}_{e}/a_{1}$ is obtained from $\mcal{B}_{e}$
by removing the Hamiltonian cycles that do not contain
$a_{1}$, and then contracting $a_{1}$ from each of the remaining
cycles.
As $b_{1}$ is a loop in $G_{e}/a_{1}$, it follows that
$\{b_{1},a_{i},b_{i}\}$ is a triangle in $M\ba e/a_{1}$ for each
$i\geq 1$, and is thus a triangle in $M/a_{1}$ and 
a triad in $M^{*}\ba a_{1}$.

We apply \Cref{safety} to $M^{*}\ba a_{1}$.
Because it has triads, it is not Category\dash(F).
Since $r(M^{*}\ba a_{1}) = r^{*}(M)\geq 3$, it
follows that $M^{*}\ba a_{1}$ is not Category\dash(B) or (C).
Note that $M^{*}\ba a_{1}$ has no parallel pairs and no loops,
as $M$ is cosimple.
So if $M^{*}\ba a_{1}$ is Category\dash(D), then it is a circuit, which
is impossible as its corank is at least two.
Now assume that $M^{*}\ba a_{1}$ is Category\dash(E), so that
$M^{*}\ba a_{1}= M^{*}(G_{a_{1}})$ for some $G_{a_{1}}\in\mcal{G}$.
Then $G_{a_{1}}$ has no loops, since $M^{*}\ba a_{1}$ has no coloops.
Furthermore, $M^{*}\ba a_{1}$ is simple, so
$G_{a_{1}}$ has at most one thin edge.
But $G_{a_{1}}$ also has an even number of edges, so it follows that
$G_{a_{1}}$ is isomorphic to $\Delta_{r}$ where $r = \tfrac{1}{2}(|E(M)|-1)$.
But then $M^{*}\ba a_{1}$ has no triads, which is impossible.
We are forced to conclude that $M^{*}\ba a_{1}$ is
Category\dash(A).

Now we choose $G_{a_{1}}\in \mcal{G}$ and a linear class
$\mcal{B}_{a_{1}}$ of at most $k$ Hamiltonian cycles so that
$M^{*}\ba a_{1} = L(G_{a_{1}},\mcal{B}_{a_{1}})$.
By \Cref{stream} we see that $G_{a_{1}}$ has no loops and
at least six vertices.
We observe that the only triads of $M^{*}\ba a_{1}$ consist
of a parallel pair in $G_{a_{1}}$ along with a thin edge.
Since $\{b_{1},a_{i},b_{i}\}$ is a triad in $M^{*}\ba a_{1}$ for each
$i\in \{2,\ldots, t\}$, we see that $b_{1}$ must be a thin
edge of $G_{a_{1}}$, and each $\{a_{i},b_{i}\}$ is a parallel pair.
As $G_{a_{1}}$ has an even number of edges, we let $x$ be
another thin edge, distinct from $b_{1}$.
Then $\{b_{1}, x\}$ is a cocircuit of $M^{*}\ba a_{1}$, and hence
a circuit in $M/a_{1}$.
Since $M$ is simple, this implies that $\{x,a_{1},b_{1}\}$ is a circuit.
If $x=e$, then there is nothing left to prove, so we assume that $x\ne e$.
Therefore $\{x,a_{1},b_{1}\}$ is a circuit of $M\ba e$.
But this is impossible, as $G_{e}$ has no loops, and since $G_{e}$
has at least six vertices, it follows that $M\ba e = L(G_{e},\mcal{B}_{e})$
has no triangles.
This completes the proof of the \namecref{weight}.
\end{proof}

Now we let $M$ be an excluded minor for $\mcal{S}_{k}$
such that $|E(M)|$ is larger than $N_{k}$ and odd.
We recall that $M$ is simple and cosimple.
As $M$ is not an excluded minor for $\mcal{S}'$,
there is an element $e\in E(M)$ such that either $M\ba e$ or $M/e$
is in $\mcal{S}_{k}$ but not in $\mcal{S}'$.
By duality, we can assume that $M\ba e$ is in $\mcal{S}_{k}-\mcal{S}'$.
We apply \Cref{safety}.
From $r(M)\geq 3$ we deduce that $M\ba e$ is not Category\dash(B) or (C), and
as it is not in $\mcal{S}'$, $M\ba e$ must be Category\dash(A).
Choose $G_{e}\in\mcal{G}$ and $\mcal{B}_{e}$, a linear class of
at most $k$ Hamiltonian cycles in $G_{e}$, such that $M\ba e = L(G_{e},\mcal{B}_{e})$.
From Claims \ref{stream} and \ref{weight}, we know that $G_{e}$ has no loops,
at least six vertices, and least three parallel pairs.
Thus $r(M)\geq 6$.
Let $\{a_{1},b_{1}\},\ldots, \{a_{t},b_{t}\}$ be the parallel pairs of edges.
Then $\{e,a_{i},b_{i}\}$ is a triangle of $M$ for each $i$.

Now we apply \Cref{safety} to $M\ba a_{1}$.
As this matroid contains triangles it is not Category\dash(F).
The inequality $r(M)\geq 6$ rules out Categories-(B) and (C).
If $M\ba a_{1}$ is Category\dash(D), then it must be a circuit,
for these are the only simple Category\dash(D) matroids.
But this would contradict $r^{*}(M)\geq 3$.
If $M\ba a_{1}$ is Category\dash(E), then
$M\ba a_{1}=M^{*}(G_{a_{1}})$ for some $G_{a_{1}}\in\mcal{G}$.
Because $M\ba a_{1}$ has no coloops, $G_{a_{1}}$ has no loops.
If it contains a thin edge, then it contains two thin
edges, as the number of edges in $G_{a_{1}}$ is even.
This implies $M\ba a_{1}$ contains a parallel pair, which is impossible.
So $G_{a_{1}}$ is $\Delta_{r}$, where $r=\tfrac{1}{2}(|E(M)|-1)$.
But then $M^{*}(G_{a_{1}})$ contains no triangles, and this
is impossible since $\{e,a_{2},b_{2}\}$ is a triangle of $M\ba a_{1}$.
Therefore $M\ba a_{1}$ is Category\dash(A).
Now we can apply \Cref{weight} to $M\ba a_{1}$.
It tells us that $a_{1}$ is in at least three triangles of $M$,
and that the intersection of any pair of these triangles
is $\{a_{1}\}$.
From this it follows that $a_{1}$ is in a triangle of
$M\ba e=L(G_{e},\mcal{B}_{e})$, which is impossible
as $G_{e}$ has at least six vertices, and no loops.
Now the proof of \Cref{cheese} is complete.
\end{proof}

We are positive that there are only finitely many excluded minors
for \mcal{S}.
But we do not require this for our main results, so we leave it
as an open problem.

\begin{problem}
Prove that \mcal{S} has only finitely many excluded minors,
and describe all of them.
\end{problem}

From this point onwards our strategy in proving \Cref{memory} is similar
to that used in the proof of \Cref{valley}.
We start by recalling \Cref{crutch}:
if $\mcal{C}=(C_{1},\ldots, C_{k})$ is a sequence of subsets of
the set $E$, then for any $I\subseteq \{1,\ldots, k\}$, we use
$\mcal{C}(I)$ to denote the set
$\{e\in E\colon e\in C_{i} \Leftrightarrow i\in I\}$.
The function $\psi_{\mcal{C}}$ takes each $I$ to $|\mcal{C}(I)|$.
If $\mcal{C}=(C_{1},\ldots, C_{k})$ is a sequence of sets, then
we define $\trun(\mcal{C})$ to be the derived sequence
$(C_{1}\cap C_{k},\ldots, C_{k-1}\cap C_{k})$.

Let $k$ and $r$ be integers
satisfying $k\geq 2$ and $r\geq 5$.
The multivalued function, $\mcal{R}_{k}^{r}$, has as its
domain the set of rank\dash $r$ Category\dash (A)
matroids with exactly $k$ circuit-hyperplanes.
The codomain is the set of functions from $\mcal{P}(\{1,\ldots, k-1\})$
to $\mbb{Z}_{\geq 0}$.
Let $M$ be a matroid in the domain, and let $\psi$ be a function
in the codomain.
The ordered pair $(M,\psi)$ belongs to $\mcal{R}_{k}^{r}$
if and only if there is an ordering $\mcal{C}=(C_{1},\ldots, C_{k})$ of the
circuit-hyperplanes in $M$ such that
$\psi$ is equal to $\psi_{\trun(\mcal{C})}$.
In this case, $\psi$ takes $I\subseteq \{1,\ldots, k-1\}$ to the
number of elements in $C_{k}$ that are in every
$C_{i}$ for $i\in I$, and in no $C_{i}$ for $i\notin I$.
Furthermore,
\[
\sum_{I\subseteq \{1,\ldots, k-1\}} \psi(I) = r.
\]
Note that the image of $M$ under $\mcal{R}_{k}^{r}$ has
cardinality at most $k!$.

A Category\dash (A) matroid $M=L(G,\mcal{B})$ with rank at
least five has at most one non-trivial parallel class
(comprising the loops of $G$), and at most one non-trivial series
class (comprising the thin edges).
Any triangle of $M$ comprises a loop of $G$ and a parallel pair,
and any triad comprises a parallel pair along with a thin edge.
Moreover, the circuit-hyperplanes of $M$ correspond exactly to the
cycles in \mcal{B}, as we noted in \Cref{galaxy}.

Let $M$ be a Category\dash (A) matroid.
If $M$ has no triangle, we set $p(M)$ to be zero, and otherwise we set it
to be the maximum size of a parallel class in $M$.
Similarly, if $M$ has no triad, we set $s(M)$ to be zero, and otherwise we set
it to be the largest size of a series class.

\begin{proposition}
\label{glance}
Let $k$ and $r$ be integers satisfying $k\geq 2$ and $r\geq 5$.
Let $M$ and $N$ be rank\dash $r$ Category\dash \textup{(A)} matroids
with exactly $k$ circuit-hyperplanes.
Then $M$ and $N$ are isomorphic if and only if
\begin{enumerate}[label=\textup{(\roman*)}]
\item $p(M) = p(N)$,
\item $s(M) = s(N)$, and
\item $\mcal{R}_{k}^{r}(M)\cap \mcal{R}_{k}^{r}(N)\ne \emptyset$.
\end{enumerate}
\end{proposition}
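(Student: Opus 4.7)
The forward direction mirrors the corresponding half of \Cref{sonnet}: any isomorphism $\rho\colon M\to N$ maps parallel classes and series classes to classes of the same cardinality, giving $p(M)=p(N)$ and $s(M)=s(N)$; and for any ordering $\mcal{C}_{M}$ of the circuit\dash hyperplanes of $M$, the sequence $\rho(\mcal{C}_{M})$ is an ordering of the circuit\dash hyperplanes of $N$ with $\psi_{\trun(\mcal{C}_{M})}=\psi_{\trun(\rho(\mcal{C}_{M}))}$.

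For the converse I will write $M=L(G_{M},\mcal{B}_{M})$ and $N=L(G_{N},\mcal{B}_{N})$ as Category\dash (A) matroids; since $r\geq 5$, the remarks preceding the \namecref{glance} tell us that the loops of $G_{M}$ are exactly the elements of the parallel class of $M$, the thin edges are exactly the elements of the series class, and likewise for $N$. Because every Hamiltonian cycle of a graph in $\mcal{G}$ contains each thin edge and exactly one edge of each parallel pair, the number of parallel pairs in $G_{M}$ equals $r-s(M)$. Hypotheses (i) and (ii) therefore force $G_{M}$ and $G_{N}$ to have the same number of loops, thin edges, and parallel pairs, and hence the same ground set size.

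Fix $\psi\in\mcal{R}_{k}^{r}(M)\cap\mcal{R}_{k}^{r}(N)$ with witnessing orderings $\mcal{C}_{M}=(C_{1}^{M},\ldots,C_{k}^{M})$ and $\mcal{C}_{N}=(C_{1}^{N},\ldots,C_{k}^{N})$. For each $I\subseteq\{1,\ldots,k-1\}$ I will choose a bijection $\trun(\mcal{C}_{M})(I)\to\trun(\mcal{C}_{N})(I)$, and within the cell indexed by $I=\{1,\ldots,k-1\}$ (which contains the thin edges, both collections of size $s(M)=s(N)$) I will arrange that thin edges of $G_{M}$ map onto thin edges of $G_{N}$. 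The union is a bijection $\rho_{0}\colon C_{k}^{M}\to C_{k}^{N}$ carrying $C_{i}^{M}\cap C_{k}^{M}$ onto $C_{i}^{N}\cap C_{k}^{N}$ for each $i<k$. I then extend $\rho_{0}$ to $\rho\colon E(M)\to E(N)$ by sending the partner $b$ of each non\dash thin $a\in C_{k}^{M}$ (the other edge of the parallel pair of $G_{M}$ containing $a$) to the partner in $G_{N}$ of $\rho_{0}(a)$, and by matching the loops of $G_{M}$ to those of $G_{N}$ arbitrarily; the cell sizes outside $C_{k}^{M}$ work out because the three quantities $\psi_{M}=\psi_{N}$, $s(M)=s(N)$, and $p(M)=p(N)$ jointly account for the contributions of parallel\dash pair partners, thin edges, and loops.

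It remains to verify that $\rho$ is a matroid isomorphism. By construction $\rho$ sends loops of $G_{M}$ to loops of $G_{N}$, thin edges to thin edges, and parallel pairs to parallel pairs; and because the partner $b$ of $a$ lies in $C_{i}^{M}$ precisely when $a$ does not (for $i<k$), $\rho$ maps $C_{i}^{M}$ bijectively onto $C_{i}^{N}$ for every $i\leq k$. Using the observation made just before \Cref{safety}, that the cyclic order in which the parallel pairs and thin edges appear is immaterial to the lift matroid, this loop, thin\dash edge, and parallel\dash pair data together with the correspondence of $\mcal{B}_{M}$ to $\mcal{B}_{N}$ completely determines the matroid, so $\rho$ is an isomorphism $M\to N$. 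I expect the most delicate step to be the allocation in the two `saturated' cells $\trun(\mcal{C}_{M})(\{1,\ldots,k-1\})$ and $E(M)-\bigcup_{i}C_{i}^{M}$: the former mixes thin edges with parallel\dash pair elements whose partners sit outside every circuit\dash hyperplane, while the latter mixes loops with those same partners. Separating these two contributions in each cell is precisely what conditions (i) and (ii) enable.
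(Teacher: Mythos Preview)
Your proposal is correct and follows essentially the same route as the paper. Both arguments build a bijection $C_{k}^{M}\to C_{k}^{N}$ cell-by-cell using the shared $\psi$, single out the thin edges inside the cell $\trun(\mcal{C}_{M})(\{1,\ldots,k-1\})$ (which the paper also does, splitting that cell into $\{d_{1},\ldots,d_{s}\}$ and its complement), extend to parallel-pair partners, and match loops arbitrarily; your appeal to the ``cyclic order is immaterial'' observation is just a repackaging of the paper's direct check that the map preserves the sets $\mcal{C}_{M}(I)$ and the four-element circuits $\{a_{i},b_{i},a_{j},b_{j}\}$.
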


\begin{proof}
Let $\rho$ be an isomorphism from $M$ to $N$.
The existence of $\rho$ clearly means that $p(M)=p(N)$ and
$s(M) = s(N)$.
Let $(C_{1},\ldots, C_{k})$ be an ordering of the
circuit-hyperplanes in $M$.
Then $\rho(\mcal{C})=(\rho(C_{1}),\ldots, \rho(C_{k}))$ is an ordering
of the circuit-hyperplanes in $N$.
It is now clear that
$\psi_{\trun(\mcal{C})} = \psi_{\trun(\rho(\mcal{C}))}$, so
$\mcal{R}_{k}^{r}(M)\cap \mcal{R}_{k}^{r}(N)$ contains at least one function.

For the converse, we assume $p(M)=p(N)$ and $s(M)=s(N)$,
and that $\mcal{R}_{k}^{r}(M)\cap \mcal{R}_{k}^{r}(N)$ contains a
function.
This means that we can let
$\mcal{C}_{M}=(C_{1}^{M},\ldots, C_{k}^{M})$ and
$\mcal{C}_{N}=(C_{1}^{N},\ldots, C_{k}^{N})$ be
orderings of the circuit-hyperplanes in $M$ and $N$ such that
the functions $\psi_{\trun(\mcal{C}_{M})}$ and $\psi_{\trun(\mcal{C}_{N})}$
are equal.

Assume that $M = L(G_{M},\mcal{B}_{M})$ and
$N = L(G_{N},\mcal{B}_{N})$, where $G_{M},G_{N}\in\mcal{G}$
have at least five vertices, and $\mcal{B}_{M}$ and $\mcal{B}_{N}$
contain exactly $k$ Hamiltonian cycles.
Both $G_{M}$ and $G_{N}$ contain $p:= p(M)=p(N)$ loops, and we
can assume these loops are labelled $c_{1},\ldots, c_{p}$ in both graphs.
Similarly, $G_{M}$ and $G_{N}$ have $s:= s(M)=s(N)$ thin edges, and we
assume these edges are labelled $d_{1},\ldots, d_{s}$.
Now $G_{M}$ and $G_{N}$ have $t:= r - s$ parallel pairs,
and we assume that these pairs are labelled $\{a_{1},b_{1}\},\ldots, \{a_{t},b_{t}\}$.

We will construct a permutation $\pi$ of the ground set
\[E(M)= E(N) =\{a_{i},b_{i}\}_{i=1}^{t}\cup\{c_{i}\}_{i=1}^{p}\cup\{d_{i}\}_{i=1}^{s}\]
such that:
\begin{enumerate}[label=\textup{(\roman*)}]
\item $\pi$ acts as the identity on $c_{1},\ldots, c_{p},d_{1},\ldots, d_{s}$,
\item $\pi$ takes any parallel pair $\{a_{i},b_{i}\}$ to another such pair, and
\item $\pi$ takes any circuit-hyperplane in $M$ to a circuit-hyperplane of $N$.
\end{enumerate}
Since the non-spanning circuits of $M$ and $N$ are exactly the
circuit-hyperplanes, along with sets of the
form $\{a_{i},b_{i},a_{j},b_{j}\}$, the existence of $\pi$ will show that
$M$ and $N$ are isomorphic.

Let \mcal{I} be the collection of proper subsets of $\{1,\ldots, k-1\}$.
For each $I\in\mcal{I}$ let $\pi_{I}$ be an arbitrary bijection from
$\trun(\mcal{C}_{M})(I)$ to $\trun(\mcal{C}_{N})(I)$.
This bijection exists because
$\psi_{\trun(\mcal{C}_{M})}(I) = \psi_{\trun(\mcal{C}_{N})}(I)$,
and hence
\[|\trun(\mcal{C}_{M})(I)|=|\trun(\mcal{C}_{N})(I)|.\]
Next we note that the thin edges $d_{1},\ldots, d_{s}$ are contained in every
circuit-hyperplane of $M$ and $N$.
That is, the elements $d_{1},\ldots, d_{s}$ are contained in both
$\trun(\mcal{C}_{M})(\{1,\ldots, k-1\})$ and
$\trun(\mcal{C}_{N})(\{1,\ldots, k-1\})$.
We let $\cap(M)$ stand for the intersection $\cap_{i=1}^{k}C_{i}^{M}$
and let $\cap(N)$ stand for  $\cap_{i=1}^{k}C_{i}^{N}$.
Let $\sigma$ be an arbitrary bijection from
\[\cap(M)-\{d_{1},\ldots, d_{s}\}\quad \text{to}\quad
\cap(N)-\{d_{1},\ldots, d_{s}\}.\]
Let $\id_{d}$ be the identity function on $\{d_{1},\ldots, d_{s}\}$.
Now let $\pi_{0}$ be the union
\[
\id_{d}\ \cup\ \sigma\ \cup\ \bigcup_{I\in\mcal{I}}\pi_{I}.
\]
Observe that $\pi_{0}$ is a bijection from $C_{k}^{M}$ to $C_{k}^{N}$.

We extend $\pi_{0}$ to a permutation of $E(M)=E(N)$
by insisting that it preserves parallel pairs.
To this end, we note that $C_{k}^{M}$ contains
the thin edges $d_{1},\ldots, d_{s}$, along with exactly one element
from each of the parallel pairs $\{a_{1},b_{1}\},\ldots, \{a_{t},b_{t}\}$.
We construct $\pi_{1}$, a bijection from
$\{a_{1},b_{1},\ldots, a_{t},b_{t}\}-C_{k}^{M}$ to
$\{a_{1},b_{1},\ldots, a_{t},b_{t}\}-C_{k}^{N}$.
If $x$ is in the domain of $\pi_{1}$, then $x$ is in a parallel pair
with an edge $y$ in $G_{M}$.
Moreover, $y$ is in $C_{k}^{M}$, so $\pi_{0}(y)$ is defined,
and is in $C_{k}^{N}$.
We note that $\pi_{0}(y)$ is in a parallel pair with an edge $x'$ in $G_{N}$, and
we set the image $\pi_{1}(x)$ to be $x'$.
Now we set $\pi$ to be $\pi_{0}\cup \pi_{1}\cup\id_{c}$, where
$\id_{c}$ is the identity function on $\{c_{1},\ldots, c_{p}\}$.
Thus $\pi$ is indeed a permutation of $E(M)=E(N)$, it
acts as the identity on $\{c_{1},\ldots, c_{p},d_{1},\ldots, d_{s}\}$,
and it takes any pair $\{a_{i},b_{i}\}$ to another such pair.

To complete the proof it suffices to show that $\pi$ takes any
circuit-hyperplane of $M$ to a circuit-hyperplane of $N$.
This in turn will follow if we can show that when $x$ is in
$\mcal{C}_{M}(I)$ for some $I\subseteq \{1,\ldots, k\}$, the
image $\pi(x)$ is in $\mcal{C}_{N}(I)$.
This is true when $x$ is in $\{c_{1},\ldots, c_{p}\}$, for then
$x\in\mcal{C}_{M}(I)$ implies $I=\emptyset$, and
$x=\pi(x)$ is also in $\mcal{C}_{N}(\emptyset)$.
Similarly, if $x$ is in $\{d_{1},\ldots, d_{s}\}$, then
$x\in\mcal{C}_{M}(I)$ implies $I=\{1,\ldots, k\}$, and
$x=\pi(x)$ is in $\mcal{C}_{N}(\{1,\ldots, k\})$.
So we assume that $x$ is not equal to any element
$c_{i}$ or $d_{i}$.
If $I$ contains $k$, then $x$ is in $C_{k}^{M}$, which means it
is in the domain of $\pi_{0}$.
In this case $\pi(x)=\pi_{0}(x)$ is in $\mcal{C}_{N}(I)$,
by construction of $\pi_{0}$.
Therefore we assume that $k$ is not in $I$, so $x$ is not in
$C_{k}^{M}$.
This means that $x$ is a non-loop edge that is contained
in a parallel pair $\{x,y\}$ in $G_{M}$, and furthermore
$y$ is in $C_{k}^{M}$.
Now $y$ is in exactly the circuit-hyperplanes that $x$ is not in.
In other words, $y$ is in $\mcal{C}_{M}(\{1,\ldots, k\}-I)$.
As $y$ is in the domain of $\pi_{0}$, it now follows that
$\pi_{0}(y)$ is in $\mcal{C}_{N}(\{1,\ldots, k\}-I)$.
But $\pi(x)$ is parallel to $\pi_{0}(y)$ in $G_{N}$, meaning that it
is in exactly the circuit-hyperplanes of $N$ that $\pi_{0}(y)$
is not in.
Thus it follows that $\pi(x)$ is in $\mcal{C}_{M}(I)$,
exactly as desired.
This completes the proof.
\end{proof}

\begin{lemma}
\label{scream}
Let $k$ be a positive integer.
The number of $2t$\dash element matroids in $\mcal{S}_{k}$
is at most $O(t^{2^{k-1}+2})$.
\end{lemma}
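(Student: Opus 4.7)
The plan is to partition $\mcal{S}_{k}$ according to the six categories of \Cref{safety} and bound the number of $2t$-element matroids in each.

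Categories (B), (C), (D), (E), and (F) contribute at most $O(t^{3})$ together. A Category-(F) matroid is determined up to isomorphism by the numbers of loops, coloops, parallel pairs, and series pairs (three free parameters bounded by $2t$), giving $O(t^{3})$ matroids on $2t$ elements. Categories (D) and (E) consist of the cycle (respectively cocycle) matroids of graphs in $\mcal{G}$; by the remark preceding \Cref{safety}, such a graph is determined up to isomorphism by the triple (loops, parallel pairs, thin edges), so each of these categories contributes $O(t^{2})$ matroids. Categories (B) and (C) have underlying graphs on at most two vertices with a bounded number of non-loop edges, and thus contribute only $O(1)$ matroids on $2t$ elements.

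The main work is Category (A). I would fix $k'\leq k$ and count Category-(A) matroids on $2t$ elements with exactly $k'$ circuit-hyperplanes. For such a matroid $M$ of rank $r\geq 5$, \Cref{glance} asserts that the triple $(p(M),s(M),\psi)$ with $\psi\in \mcal{R}_{k'}^{r}(M)$ determines the isomorphism class of $M$. The parameters $p(M)$ and $s(M)$ each lie in $\{0,1,\ldots,2t\}$, contributing $O(t^{2})$ choices. For fixed $r$, the admissible functions $\psi\colon \mcal{P}(\{1,\ldots,k'-1\})\to \mbb{Z}_{\geq 0}$ are those summing to $r$, and the number of such functions equals
\[
\binom{r+2^{k'-1}-1}{2^{k'-1}-1}=O(r^{2^{k'-1}-1}).
\]
Summing over $r\in\{5,\ldots,2t\}$ contributes an extra factor of $O(t^{2^{k'-1}})$. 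Multiplying yields $O(t^{2^{k'-1}+2})$ matroids for each $k'$, and the finitely many Category-(A) matroids with rank less than five are absorbed into the constant. Summing over $k'\in\{0,1,\ldots,k\}$ gives $O(t^{2^{k-1}+2})$ Category-(A) matroids in total.

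This total dominates the $O(t^{3})$ contribution from the other categories, yielding the stated bound. The argument is essentially a bookkeeping exercise built on \Cref{safety} and \Cref{glance}, so no step is a genuine obstacle. The one delicate point is that the pair $(p(M),s(M))$ does not in general determine the rank of $M$ (both $p(M)=0$ and $s(M)=0$ subsume several distinct configurations of loops, parallel pairs, and thin edges in the underlying graph), so the sum over $r$ must be carried out explicitly; it is precisely this extra summation that produces the exponent $2^{k-1}+2$ rather than $2^{k-1}+1$.
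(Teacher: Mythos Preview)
Your proposal is correct and follows essentially the same route as the paper: dispose of Categories (B)--(F) and low-rank Category-(A) matroids with crude polynomial bounds, then for Category-(A) matroids of rank at least five invoke \Cref{glance} to encode each isomorphism class by $(p(M),s(M),\psi)$ and count. Two small housekeeping points worth tightening: \Cref{glance} and the function $\mcal{R}_{k'}^{r}$ are only defined for $k'\geq 2$, so the cases $k'\in\{0,1\}$ need a separate one-line argument (the paper notes that such matroids are determined by the numbers of loops and thin edges, giving $O(t^{2})$); and in Category~(F) a two-element connected component is simultaneously a parallel pair and a series pair, so your four parameters collapse to three and the count is actually $O(t^{2})$ rather than $O(t^{3})$---harmless for the final bound.
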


\begin{proof}
We refer to \Cref{safety}.
A Category\dash (F) matroid with $2t$ elements
is determined by giving the number of loops and the number
of coloops.
This argument shows that there are no more than $O((2t)^{2})$
such matroids, so we will henceforth disregard them.
Up to isomorphism, a $2t$\dash element matroid of the form
$M(G)$ or $M^{*}(G)$
can be determined by the number of loops and thin edges in $G$.
Therefore the number of $2t$\dash element Category\dash (D) or (E)
matroids is at most $O((2t)^{2})$, so we also disregard these classes.
There is a constant number of loopless graphs in \mcal{G} with
a bounded number of vertices.
Thus there is a constant number of graphs with \mcal{G} with
$2t$ edges and a bounded number of vertices.
So we disregard any matroids of the form
$L(G,\mcal{B})$ when $G\in\mcal{G}$ has fewer than five vertices.
Thus we have disregarded Category\dash (B) and (C) matroids,
and we now need only consider Category\dash (A) matroids
with rank at least five.
A Category\dash (A) matroid $L(G,\mcal{B})$ with at most one
circuit-hyperplane is determined up to isomorphism by the number of
loops and thin edges in $G$, so there are at most $O((2t)^{2})$
such matroids.
Therefore we may as well assume that $k$ is at least two,
and we will consider only matroids with at least two
circuit-hyperplanes.

Our arguments have shown that we need only consider
$2t$\dash element Category\dash (A) matroids
with rank at least five and at least two circuit-hyperplanes.
We categorise these matroids as having rank $r$, where
$r$ satisfies $5\leq r \leq 2t$, and having exactly
$m$ circuit-hyperplanes, where $m$ satisfies
$2\leq m \leq k$.
The number of pairs $(r,m)$ is $O(t)$, so we will be done
if we can show that the number of matroids corresponding to
the pair $(r,m)$ is at most $O(t^{2^{k-1}+1})$.
By \Cref{glance}, these matroids can be determined by a pair of
numbers from $\{0,\ldots, 2t\}$, and a function
$\psi\colon \mcal{P}(\{1,\ldots, m-1\})\to \mbb{Z}_{\geq 0}$ such that
$\sum_{I\subseteq\{1,\ldots, m-1\}}\psi(I) = r$.
The number of such functions is exactly
\[
\binom{r+2^{m-1}-1}{r} = \binom{r+2^{m-1}-1}{2^{m-1}-1}
\]
which is at most $O(r^{2^{m-1}-1})$.
This is in turn bounded by $O(t^{2^{k-1}-1})$.
There are at most $O((2t)^{2})=O(t^{2})$ ways of selecting the two numbers
in $\{0,\ldots, 2t\}$, so this leads to a bound of $O(t^{2^{k-1}+1})$ matroids
corresponding to the pair $(r,m)$, as we wanted.
\end{proof}

\begin{lemma}
\label{bottom}
Let $k\geq 2$ be an integer.
The number of $2t$\dash element excluded minors for
$\mcal{S}_{k}$ is at least $\Omega(t^{2^{k}-k-3})$.
\end{lemma}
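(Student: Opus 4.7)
The plan is to mirror the construction in Lemma~\ref{collar}. We produce many rank-$t$ spikes $M=L(\Delta_t,\mcal{B})$ with $|\mcal{B}|=k+1$, each an excluded minor for $\mcal{S}_k$.

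Let $\mcal{J}$ be a set of representatives for the unordered pairs $\{J,J^c\}$ with $J\subseteq\{1,\ldots,k+1\}$ and $2\leq|J|\leq k-1$; then $|\mcal{J}|=2^k-k-2$. For each non-negative integer solution $\phi$ to $\sum_{J\in\mcal{J}}x_J=t-(k+1)$, we build $M_\phi$ on the ground set of $\Delta_t$ with parallel pairs $\{a_1,b_1\},\ldots,\{a_t,b_t\}$ as follows. For $i\in\{1,\ldots,k+1\}$, designate the $i$-th parallel pair as a \emph{seed pair}: assign $a_i$ the indicator $\{i\}$ and $b_i$ the indicator $\{1,\ldots,k+1\}-\{i\}$. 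For each $J\in\mcal{J}$, designate $\phi(J)$ of the remaining parallel pairs to have type $\{J,J^c\}$, with the two edges receiving indicators $J$ and $J^c$ respectively. Setting $C_j=\{e:j\in I(e)\}$ for $j=1,\ldots,k+1$ (where $I(e)$ is the indicator assigned to $e$), $\mcal{B}=\{C_1,\ldots,C_{k+1}\}$, and $M_\phi=L(\Delta_t,\mcal{B})$ gives the candidate spike.

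Each $C_j$ selects one edge per parallel pair, so it is Hamiltonian in $\Delta_t$. Every theta-subgraph of $\Delta_t$ consists of a parallel pair together with a cycle running the opposite way around, so the linear-class condition amounts to: no two cycles in $\mcal{B}$ differ on exactly one parallel pair. For distinct $j,j'$, the seed pairs indexed by $j$ and $j'$ are both parallel pairs where $C_j$ and $C_{j'}$ disagree (the singleton indicator $\{j\}$ separates $\{j,j'\}$, and similarly for $\{j'\}$), yielding at least two disagreement coordinates. By \Cref{galaxy}, $M_\phi$ has exactly $k+1$ circuit-hyperplanes, and walking through the categories in \Cref{safety} (using rank at least $5$, together with the fact that a spike with $k+1$ circuit-hyperplanes is neither graphic nor cographic) confirms $M_\phi\notin\mcal{S}_k$. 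Every indicator has size in $\{1,k\}\cup\{2,\ldots,k-1\}$, so every element lies in between $1$ and $k$ cycles of $\mcal{B}$; hence the deletion and contraction formulas for lift matroids stated before \Cref{safety} guarantee that $M_\phi\ba e$ and $M_\phi/e$ are Category-(A) matroids with at most $k$ circuit-hyperplanes and thus in $\mcal{S}_k$. So $M_\phi$ is an excluded minor.

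For the count, stars-and-bars yields $\binom{t-(k+1)+|\mcal{J}|-1}{|\mcal{J}|-1}=\Omega(t^{2^k-k-3})$ valid $\phi$. Since $p(M_\phi)=s(M_\phi)=0$ (as $\Delta_t$ has neither loops nor thin edges), \Cref{glance} shows $M_{\phi_1}\cong M_{\phi_2}$ iff their $\mcal{R}_{k+1}^t$-images meet, which translates to $\phi_1$ and $\phi_2$ being related by a permutation in $S_{k+1}$ acting on the subsets $J$. Each orbit has size at most $(k+1)!$, so the number of isomorphism classes of excluded minors is at least $\Omega(t^{2^k-k-3})/(k+1)!=\Omega(t^{2^k-k-3})$. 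The main obstacle will be carefully translating the theta-subgraph structure of $\Delta_t$ into the pairwise-separation requirement (so the seed really does enforce the linear-class condition) and checking that $\mcal{R}_{k+1}^t(M_\phi)$ recovers the multiset $\{\phi(J)\}_{J\in\mcal{J}}$ up to the $S_{k+1}$-action, so that the quotient by $(k+1)!$ is valid.
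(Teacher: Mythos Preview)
Your proposal is correct and follows essentially the same approach as the paper's proof: construct spikes $L(\Delta_t,\mcal{B})$ with $|\mcal{B}|=k+1$, use a seed to force the linear-class condition and the excluded-minor property, parametrize the remaining freedom by a stars-and-bars count of size $\Omega(t^{2^{k}-k-3})$, and control overcounting via \Cref{glance} and the $(k+1)!$ bound on $|\mcal{R}_{k+1}^{t}(M)|$. The only cosmetic differences are that the paper fixes $C_{k+1}=\{a_1,\ldots,a_t\}$ and indexes by $I\subseteq\{1,\ldots,k\}$ with $1\le|I|\le k-2$ (equivalent to your complementary pairs $\{J,J^{c}\}$), and it plants two seed elements per cell rather than one seed pair per index; neither affects the argument or the asymptotics.
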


\begin{proof}
We will assume that $t$ is at least five.

Let \mcal{I} be the collection $\{I\subseteq \{1,\ldots, k\}\colon 1\leq I \leq k-2\}$
and note that $|\mcal{I}| = 2^{k} - k - 2$.
For each $I\in \mcal{I}$ we introduce a variable $x_{I}$.
We consider non-negative integer solutions to the equation
\begin{equation}
\label{eqn2}
\sum_{I\in \mcal{I}}x_{I} = t - 2(k+1).
\end{equation}
The number of such solutions is exactly
\[
\binom{t+2^{k}-3k-5}{t-2k-2} = \binom{t+2^{k}-3k-5}{2^{k}-k-3},
\]
which is at least $\Omega(t^{2^{k}-k-3})$.

Let $\phi$ be a solution to \eqref{eqn2}, so that
$\phi$ is a function taking $\{x_{I}\}_{I\in\mcal{I}}$ to
non-negative values, and summing over the image of $\phi$
produces a total of $t-2(k+1)$.
We are going to construct a sequence
$\mcal{D} = (D_{1},\ldots, D_{k})$ of subsets of
$\{a_{1},\ldots, a_{t}\}$ in such a way that
$|\mcal{D}(I)| = \phi(x_{I})$ for each $I\in\mcal{I}$.
We do this by allocating each element in
$\{a_{1},\ldots, a_{t}\}$ to $\mcal{D}(I)$ for a unique
subset $I\subseteq \{1,\ldots, k\}$.
We start by allocating two elements to
$\mcal{D}(\emptyset)$.
These two elements are in none of the sets
$D_{1},\ldots, D_{k}$.
Next, for each $i\in\{1,\ldots, k\}$, we allocate two elements
to $\mcal{D}(\{1,\ldots, k\}-i)$.
These two elements will be in all of the sets
$D_{1},\ldots, D_{k}$ except for $D_{i}$.
Now there are $t-2(k+1)$ elements left to allocate.
We allocate no elements to
$\mcal{D}(\{1,\ldots, k\})$, so that no element of
$\{a_{1},\ldots, a_{t}\}$ is contained in all of the sets.
The remaining $n-2(k+1)$ elements in
$\{a_{1},\ldots, a_{t}\}$ are allocated to the
sets $\mcal{D}(I)$ for $I\in \mcal{I}$ according to the
function $\phi$, so that $|\mcal{D}(I)| = \phi(x_{I})$.

Next we construct subsets $\mcal{C}=(C_{1},\ldots, C_{k+1})$ of
$\{a_{1},\ldots, a_{t},b_{1},\ldots, b_{t}\}$.
We set $C_{k+1}$ to be $\{a_{1},\ldots, a_{t}\}$.
For $i\in \{1,\ldots, k\}$, we define
$C_{i}$ to be the union of $D_{i}$ and
$\{b_{j}\colon a_{j}\notin D_{i}, 1\leq j\leq k\}$.
Thus each set $C_{i}$ contains exactly one element from each pair
$\{a_{j},b_{j}\}$.
Furthermore, $\trun(\mcal{C}) = (D_{1},\ldots, D_{k})$, so
$\psi_{\trun(\mcal{C})} = \phi$.
It follows from $\mcal{D}(\{1,\ldots, k\})=\emptyset$ that
no element of $\{a_{1},\ldots, a_{t},b_{1},\ldots, b_{t}\}$
is in all of the sets $C_{1},\ldots, C_{k+1}$, and that every element is in
at least one of $C_{1},\ldots, C_{k+1}$.

Let $G$ be a graph obtained from a cycle of
length $t$ by replacing each edge with a parallel pair.
Let $\{a_{1},b_{1}\},\ldots, \{a_{t},b_{t}\}$ be the parallel
pairs in $G$.
Let \mcal{B} be the class of Hamiltonian cycles in $G$ with
edge-sets $C_{1},\ldots, C_{k+1}$.
We claim that \mcal{B} is a linear class.
Let us assume otherwise.
Any theta-subgraph in $G$ consists of a Hamiltonian
cycle with one additional edge.
So if \mcal{B} is not a linear class,
then there are two sets $C_{i}$ and $C_{j}$ such that
$C_{i}-C_{j}$ contains a single element.
But two elements of $\{a_{1},\ldots, a_{t}\}$ are in none
of the sets $D_{1},\ldots, D_{k}$, so these two elements
are in $C_{k+1}$ but none of $C_{1},\ldots, C_{k}$.
So $i$ is not $k+1$, and hence $i$ is in $\{1,\ldots, k\}$.
There are two elements of $\{a_{1},\ldots, a_{t}\}$
that are in all of the sets $D_{1},\ldots, D_{k}$ other than $D_{i}$.
This means that two elements in $\{b_{1},\ldots, b_{t}\}$
are in none of the sets $C_{1},\ldots, C_{k+1}$
except for $C_{i}$, and this is a contradiction.
Therefore \mcal{B} is a linear class,
as we claimed.

We let $M$ be the spike $L(G,\mcal{B})$.
Since $M$ has $k+1$ circuit-hyperplanes,
and $t\geq 5$, it follows without difficulty from \Cref{galaxy} that
$M$ is not in $\mcal{S}_{k}$.
However, since every element of $M$ is in at least one
circuit-hyperplane, and avoids at least one
circuit-hyperplane, deleting or contracting any element
from $M$ produces a minor $L(G',\mcal{B}')$,
where $G'$ is in \mcal{G}, and $\mcal{B}'$ contains
at most $k$ Hamiltonian cycles.
So $M$ is indeed an excluded minor for $\mcal{S}_{k}$.

For each solution to \eqref{eqn2} we construct an excluded
minor for $\mcal{S}_{k}$, as detailed above.
Some of these excluded minors may be isomorphic.
But all excluded minors constructed in this way have
no triangles and no triads, so the functions $p$ and $s$ return zero.
Now \Cref{glance} implies that the excluded minors
are isomorphic if and only if they have the same images under
$\mcal{R}_{k+1}^{t}$.
So any isomorphism class amongst the constructed excluded minors
is no larger than the image of a matroid under
$\mcal{R}_{k+1}^{t}$, which is at most $(k+1)!$.
Since there are $\Omega(t^{2^{k}-k-3})$ solutions to
\eqref{eqn2}, and $k$ is constant with respect to $t$, it follows that
the number of excluded minors is at least $\Omega(t^{2^{k}-k-3})$,
as claimed.
\end{proof}

\begin{proof}[Proof of \textup{\Cref{memory}}.]
\Cref{cheese} implies that $\Gamma_{\mcal{S}_{k}}(2t+1)=0$
for all sufficiently large values of $t$.
So $\Gamma_{\mcal{S}_{k}}(n)$ does not tend to one, and hence
$\mcal{S}_{k}$ is certainly not strongly fractal.
However, by \Cref{scream,bottom} we see that for sufficiently
large values of $t$ we have
\[
\Gamma_{\mcal{S}_{k}}(2t)\geq
\frac{c_{1}t^{2^{k}-k-3}}{c_{2}t^{2^{k-1}+2}+c_{1}t^{2^{k}-k-3}}
=\frac{1}{(c_{2}/c_{1})t^{-2^{k-1}+k+5}+1}
\]
for some constants $c_{1}$ and $c_{2}$.
Since $k\geq 5$, we see that $-2^{k-1}+k+5$ is negative.
Thus $\Gamma_{\mcal{S}_{k}}(2t)$ tends to one as $t$ tends to
infinity, meaning that $\mcal{S}_{k}$ is weakly fractal.
\end{proof}

\end{document}